\newtheorem{thm}{Theorem}[section]
\newtheorem{lem}[thm]{Lemma}
\newtheorem{cor}[thm]{Corollary}
\newtheorem{prop}[thm]{Proposition}
\theoremstyle{definition}
\theoremstyle{remark}
\begin{document}

\title[Constructing ordered orthogonal arrays via sudoku]{Constructing ordered orthogonal arrays via sudoku}
\author{John Lorch}
\address{Department of Mathematical Sciences\\ Ball State University\\Muncie, IN  47306-0490}
\email{jlorch@bsu.edu}
\subjclass[2010]{05B15; 12E20}
\date{December 19, 2014}
\begin{abstract}
For prime powers $q$ we use ``strongly orthogonal" linear sudoku solutions of order $q^2$ to construct ordered orthogonal arrays of type ${\rm OOA} (4,s,2,q)$, and for each $q$ we present a range of values of $s$ for which these constructions are valid. These results rely strongly on flags of subspaces in a four dimensional vector space over a finite field.
\end{abstract}
\maketitle

\def\a{{\bf a}}
\def\MF{M^{2\times 2}(\F )}
\def\rtimes{{\times\!\! |}}
\def\rk{{\rm rank}}
\def\A{{\mathcal A}}
\def\reg{{\mathcal R}}
\def\P{{\mathcal P}}
\def\F{{\mathbb F}}
\def\L{{\mathcal L}}
\def\R{{\mathbb R}}
\def\C{{\mathbb C}}
\def\Z{{\mathbb Z}}
\def\B{{\mathcal B}}
\def\G{{\mathcal G}}
\def\g{\mathfrak g}
\def\OOA{{\rm OOA}}
\def\OA{{\rm OA}}
\def\S{{\mathcal S}}
\def\SA{{\rm SA}}
\def\SD{{\rm SD}}
\def\TD{{\rm TD}}
\def\Q{{\mathcal Q}}
\def\pt#1{{\langle  #1 \rangle}}
\def\ds{\displaystyle}
\def\v{{\bf v}}
\def\w{{\bf w}}

\section{Introduction}
\label{s:introduction}

\subsection{Purpose}
It has long been known that families of mutually orthogonal latin squares have important connections and applications to finite geometry, statistical design, and graph theory, many of which are illustrated in \cite{cC07} and \cite{fR84}. This is exemplified by the following Theorem (e.g., see \cite{cC07} or \cite{cC01}).
\begin{thm}\label{t:connections}
Let $n,s\in \Z ^+$ with $3\leq s \leq n+1$. The following are equivalent:
\begin{itemize}
\item[(a)] There exists a collection of $s-2$ mutually orthogonal latin squares of order $n$.
\item[(b)] There exists a Bruck net of order $n$ and degree $s$.
\item[(c)] There exists a transversal design of type $\TD (s,n)$.
\item[(d)] There exists an orthogonal array of type $\OA (s,n)$.
\item[(e)] There exists an edge partition of the complete $s$-partite graph $K_{n,\dots ,n}$ into complete subgraphs of order $s$.
\end{itemize}
\end{thm}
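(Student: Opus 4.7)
The plan is to route all five conditions through the orthogonal array $\OA(s,n)$ as a hub, establishing (a)$\Leftrightarrow$(d), (c)$\Leftrightarrow$(d), (b)$\Leftrightarrow$(c), and (e)$\Leftrightarrow$(c), from which the full cycle of equivalences follows. This keeps the translations concrete and symmetric.

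For (a)$\Leftrightarrow$(d), I would construct the $n^2\times s$ array whose row indexed by cell $(i,j)$ reads $(i,\,j,\,L_1(i,j),\,\dots,\,L_{s-2}(i,j))$, where $L_1,\dots,L_{s-2}$ are the given MOLS. The Latin-square condition on each $L_k$ ensures that every ordered pair appears exactly once in the columns $\{1,k+2\}$ and $\{2,k+2\}$, while pairwise orthogonality of the $L_k$ handles the pairs among the last $s-2$ columns. Conversely, fixing any $\OA(s,n)$ and restricting to columns $\{1,2,k\}$ for each $k\geq 3$ extracts a Latin square $L_{k-2}$, and OA-balance yields pairwise orthogonality.

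For (c)$\Leftrightarrow$(d), choose bijections from each of the $s$ groups of a $\TD(s,n)$ to $\{1,\dots,n\}$; reading off the $s$ labels of each block produces a row of an $n^2\times s$ array, and pairwise balance of the TD becomes exactly the OA condition. For (b)$\Leftrightarrow$(c), I would invoke point/line duality: the parallel classes of a Bruck net become the groups of a TD whose ``points'' are the lines of the net and whose ``blocks'' are the original points of the net (each net-point lies on exactly one line from each parallel class, hence meets each group in one TD-point). Two TD-points from distinct groups correspond to non-parallel lines meeting in a unique net-point, which is the required pairwise-balance condition. Finally (e)$\Leftrightarrow$(c) is essentially immediate: the parts of $K_{n,\dots,n}$ correspond to the groups of the TD, and a complete subgraph of order $s$ is nothing other than a transversal of the parts, i.e.\ a block.

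The main obstacle will not be any single implication --- each is a direct dictionary --- but rather the careful bookkeeping of balance and orthogonality conditions across all four translations, especially the point/block duality in (b)$\Leftrightarrow$(c). One should also verify that the parameter range $3\leq s\leq n+1$ is preserved throughout, which is a consequence of the classical bound $s-2\leq n-1$ on the number of MOLS of order $n$.
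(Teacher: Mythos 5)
Your proposal is correct: each of the four translations (MOLS $\leftrightarrow$ OA via the $(i,j,L_1(i,j),\dots)$ rows, TD $\leftrightarrow$ OA via labelled blocks, net $\leftrightarrow$ TD via point--line duality, and clique partition $\leftrightarrow$ TD via parts-as-groups) is the standard dictionary and works as you describe. The paper itself gives no proof of this theorem --- it is quoted from the literature (\cite{cC07}, \cite{cC01}) --- so there is no argument to compare against; your hub-and-spoke route through the orthogonal array is exactly the classical one. The only small imprecision is your closing remark: the stated range $3\leq s\leq n+1$ is not something to be ``preserved'' by the equivalences (which hold for any $s$ in that range whether or not the objects exist); it merely delimits the parameters for which the five objects can exist at all, with $s\leq n+1$ reflecting the bound $N(n)\leq n-1$.
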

A sudoku solution is a special kind of latin square. Given that families of mutually orthogonal sudoku solutions have become of interest in their own right (e.g., see \cite{rB08}, \cite{aK07}, \cite{aK10}, \cite{jL10}, \cite{jL12}, \cite{rP09}), it is reasonable to seek combinatorial `soul mates' for orthogonal sudoku solutions, along the lines of those given for latin squares in Theorem \ref{t:connections}. Ordered orthogonal arrays of
type $\OOA (4,s,2,q)$ seem to make good candidates: In this paper we show that when $q$ is a prime power, special families of mutually orthogonal sudoku solutions of order $q^2$ can be used to construct ordered orthogonal arrays of this type.

\subsection{Background}
For a positive integer $n$, a {\bf sudoku solution} of order $n^2$ is an $n^2\times n^2$ Latin square with the additional requirement that every symbol must appear in each canonical $n\times n$ subsquare. These occur in case $n=3$ when one is lucky enough to solve the newspaper sudoku puzzle (Figure \ref{f:sudokusolution}). Locations in a sudoku solution of order
$n^2$ can be populated by any set of $n^2$ distinct symbols; we use symbols $\{0,1,\dots ,n^2-1\}$.

\begin{figure}[h]
      $$ {\small \begin{array}{| c  c  c | c c c | c c c|}
    \hline
      0 & 1 & 2 &  4 & 5 &3 & 8 & 6 & 7 \\
      3 & 4 & 5 &  7 & 8 &6 & 2 & 0 & 1 \\
      6 & 7 & 8 &  1 & 2 &0 & 5 & 3 & 4 \\ \hline
      1 & 2 & 0 &  5 & 3 &4 & 6 & 7 & 8 \\
      4 & 5 & 3 &  8 & 6 &7 & 0 & 1 & 2 \\
      7 & 8 & 6 &  2 & 0 &1 & 3 & 4 & 5 \\ \hline
      2 & 0 & 1 &  3 & 4 &5 & 7 & 8 & 6 \\
      5 & 3 & 4 &  6 & 7 &8 & 1 & 2 & 0 \\
      8 & 6 & 7 &  0 & 1 &2 & 4 & 5 & 3 \\ \hline
    \end{array}} $$
       \caption{A sudoku solution of order $9$. }
       \label{f:sudokusolution}
       \end{figure}
Two latin squares of like order are said to be {\bf orthogonal} if, upon superimposition, each ordered pair of symbols appears exactly once (Figure \ref{f:orthogonal}). Families of pairwise orthogonal latin squares have long been an object of study, in part due to connections illustrated in Theorem \ref{t:connections}. One of the most famous open problems concerning orthogonal latin squares is determining the maximum size $N(n)$ of a family of pairwise mutually orthogonal latin squares of order $n$. It is well known that $N(n)\leq n-1$, and, due to field theoretic considerations, that $N(n)=n-1$ when $n$ is a prime power (e.g., \cite{fR84} and \cite{gM95}). Orthogonality questions for latin squares specialize to sudoku. For instance, it is known that the maximum size of a family of pairwise mutually orthogonal sudoku solutions of order $n^2$ is $n(n-1)$, and that this bound is achieved when $n$ is a prime power (e.g., \cite{rB08}, \cite{jL12}, \cite{rP09}). {\bf Linear sudoku solutions}, which are useful for creating mutually orthogonal families of sudoku solutions, play a key role in the present work and will be defined carefully in Section \ref{s:linear}.

\begin{figure}[h]
$$
\begin{minipage}{1in}
    $\begin{array}{| c c | c c |}
    \hline
      0 & 1 & 3 &2 \\
      2 & 3 & 1 & 0\\ \hline
      3 &2 & 0 & 1 \\
      1 & 0 & 2 &3 \\ \hline
    \end{array}$
  \end{minipage}
  \begin{minipage}{1in}
    $\begin{array}{|c c | c c|}
    \hline
      0 & 3 & 2 &1 \\
      2 & 1 & 0 & 3\\ \hline
      3 &0 & 1 & 2 \\
      1  & 2 &3 & 0\\ \hline
    \end{array}$
  \end{minipage}
  \quad \Longrightarrow \quad
  \begin{array}{| c c | c c |}
    \hline
      00 & 13 & 32 &21 \\
      22 & 31 & 10 & 03\\ \hline
      33 &20 & 01 & 12 \\
      11 & 02 & 23 &30 \\ \hline
    \end{array}
   $$
\caption{Orthogonal sudoku solutions of order four.}
\label{f:orthogonal}
\end{figure}

Part (d) of Theorem \ref{t:connections} indicates that families of mutually orthogonal latin squares are characterized by orthogonal arrays. In the spirit of this theorem, we explore connections between mutually orthogonal sudoku solutions and ordered orthogonal arrays.  An {\bf ordered orthogonal array} of type $\OOA _\lambda (t,s,\ell ,v)$ consists of an $s\ell \times \lambda v^t$ array whose entries are drawn from an alphabet of size $v$. This array must satisfy a condition that requires explanation: Rows of the array are partitioned into $s$ bands, each of width $\ell$, and can be labeled lexicographically from top to bottom by ordered pairs $\{(i,j)\mid 1\leq i\leq s, 1\leq j\leq \ell\}$. A set $T$ of $t$ rows within this array is said to be {\bf top-justified} if whenever the $(i,j)$-th row lies in $T$ we have that the $(i,k)$-th row also lies in $T$ for $1\leq k\leq j$. Regarding a set $T$ of top justified rows as a sub-array of the original array, we require that each $t$-tuple column of $T$ occurs exactly $\lambda$ times in this subarray. (See Figure \ref{f:ooa} for an example.) Ordered orthogonal arrays are generalizations of orthogonal arrays: one obtains an `ordinary' orthogonal array by putting $\ell =\lambda =1$ and $t=2$. We will focus on the case $\ell =2$, $\lambda =1$, and $t=4$. We follow the custom of dropping the $\lambda$ subscript from $\OOA$ when $\lambda =1$.

\begin{figure}[h]
$$
    \begin{array}{|cccccccccccccccc|}
 \hline
 {\bf 0}& \bf  0 & \bf 0 & \bf 0 &\bf  0 & \bf 0 &\bf  0 & \bf 0 & \bf 1 & \bf 1 &\bf  1 & \bf 1 & \bf 1 &\bf  1 &\bf  1 &\bf  1 \\
  \bf 0&\bf  0 &\bf  0 &\bf 0 & \bf 1 & \bf 1 & \bf 1 & \bf 1 & \bf 0 & \bf 0 & \bf 0 &\bf  0 & \bf 1 & \bf 1 &\bf  1 &\bf  1 \\
 \hline
 \bf 0& \bf 0 & \bf 1 &\bf 1 &\bf 0 &\bf 0 &\bf 1 &\bf 1 &\bf 0 &\bf 0 &\bf 1 &\bf 1 &\bf 0 &\bf 0 &\bf 1 &\bf 1 \\
 0& 1 & 0 & 1 & 0 & 1 & 0 & 1 & 0 & 1 & 0 & 1 & 0 & 1 & 0 & 1 \\
 \hline
\bf 0&\bf 1 &\bf 1 &\bf 0 &\bf 1 &\bf 0 &\bf 0 &\bf 1 &\bf 0 &\bf 1 &\bf 1 &\bf 0 &\bf 1 &\bf 0 &\bf 0 &\bf 1 \\
 0& 0 & 1 & 1 & 1 & 1 & 0 & 0 & 1 & 1 & 0 & 0 & 0 & 0 & 1 & 1 \\
 \hline
 \end{array}
   $$
\caption{An ordered orthogonal array of type $\OOA  (4,3,2,2)$. A set of top-justified rows is shown in bold.}
\label{f:ooa}
\end{figure}

 Ordered orthogonal arrays were first defined for arbitrary parameters in \cite{kL96}, where for certain parameters they are used to characterize $(t,m,s)$-nets, which in turn have significant applications in quasi-Monte Carlo methods. Constructions and/or parameter bounds for ordered orthogonal arrays and associated $(t,m,s)$-nets  may be found, for example, in \cite{kL96}, \cite{mS99}, \cite{mA99}, and \cite{hT10}.\footnote{Several papers, including Lawrence's original paper \cite{kL96}, refer to ordered orthogonal arrays as ``generalized orthogonal arrays." We follow the terminology of \cite{cC07}. Also, $(t,m,s)$ nets are ``low discrepancy" multisets in $[0,1)^s$ and should not be confused with Bruck nets. The ordered orthogonal arrays considered in this paper do not characterize $(t,m,s)$ nets.}

When $q$ is a prime power, we construct ordered orthogonal arrays of type $\OOA (4,s,2,q)$ for $s\leq \frac{q+4}{2}$. (Theorem \ref{t:big} and Corollary \ref{c:big}). These are not the best possible existence results for small values of $q$, such as $q=3,4,5$, which necessarily result in $s<6$: In Theorem \ref{t:substrong} and Corollary \ref{c:little} we construct arrays of type  $\OOA (4,3,2,q)$  and $\OOA (4,4,2,q)$. Our constructions arise by identifying these ordered orthogonal arrays with special sets of mutually orthogonal sudoku solutions that we call {\bf strongly orthogonal}.\footnote{The term {\em strong orthogonality} has been used elsewhere in the world of combinatorial design, such as in \cite{eM13}, where the spirit
 of its use, and perhaps more, is reminiscent of how the term is used here.} Results in this paper rely strongly on flags of subspaces in a four dimensional vector space over a finite field.
 
 We briefly attempt to put these results in context. Let $f(k,b)$ denote the maximum value of $s$ for which $\OOA (4,s,k,b)$ exists. In general, the value of $f(k,b)$ is unknown, and a principle goal of combinatorial design research, in addition to constructing designs, is to produce ever larger lower bounds for unknown maximal parameters such as $f(k,b)$. Bush \cite{kB52} showed that $\OOA (t,q+1,1,q)$ exists when $q$ is a prime power with $q>t$, so in particular $f(1,q)\geq q+1$ for $q>4$. Meanwhile, Lawrence \cite{kL96} showed that $f(4,q)\geq \lfloor \frac{f(1,q)}{2} \rfloor$. Therefore, since $f(4,q)\leq f(2,q)$, we can conclude that $f(2,q)\geq \lfloor \frac{q+1}{2}\rfloor $ for prime powers $q>4$. Observe that the results mentioned in the previous paragraph indicate that $f(2,q)\geq \frac{q+4}{2}$ for prime powers $q$. So perhaps we have found a larger lower bound for $f(2,q)$, but the real novelty here is the method of construction of ordered orthogonal arrays, which might find applications in other settings.

The structure of the paper is as follows: In Section \ref{s:sudokuarray} we identify mutually orthogonal families of sudoku solutions with arrays that have the potential to be ordered orthogonal arrays (Theorem \ref{t:equivtosudoku}). In Section \ref{s:so} we tease out necessary and sufficient conditions that must be satisfied by a collection of orthogonal sudoku solutions in order to give rise to an ordered orthogonal array (Proposition \ref{p:conditions}). The conditions of Section \ref{s:so} are translated to a linear setting in Section \ref{s:linear}, and sudoku flags are introduced. Using the machinery of sudoku flags, construction of ordered orthogonal arrays occurs in Section \ref{s:co}.

\section{Sudoku arrays}\label{s:sudokuarray}

In this section we produce an exact analog of Theorem \ref{t:connections}, part (d), for sudoku. We will also observe that every ordered orthogonal array of type $\OOA  (4,s,2,n)$ determines a special family of $s-2$ mutually orthogonal sudoku solutions of order $n^2$.

We briefly introduce terminology that will be used frequently as we proceed. If $b_nb_1$ is the base-$n$ representation of $x\in \{0,1,\dots, n^2-1\}$, meaning $x=b_n\cdot n+b_1\cdot 1$ and $b_n,b_1\in\{0,1,\dots, n-1\}$, we call $b_n$ the {\bf radix digit} of $x$ and $b_1$ the {\bf units digit}.
Locations within an order-$n^2$ sudoku solution can be identified with four-tuples $(x_1,x_2,x_3,x_4)\in \Z_n ^4$ where
$x_1$ determines a {\bf large row} (i.e., a row of subsquares),
$x_2$ determines a row within a large row,
$x_3$ determines a {\bf large column} (i.e., a column of subsquares), and
$x_4$ determines a column within a large column.
Large rows (and rows within large rows) are labeled in increasing order from top to bottom starting with $0$, while large columns (and columns within large columns) are labeled in increasing order from left to right starting with $0$. Entries $x_1$ and $x_2$ together completely determine the row of a particular location; entries $x_3$ and $x_4$ do the same for the column of a particular location. Said another way, the first two coordinates of $(x_1,x_2,x_3,x_4)$ can be viewed as a base-$n$ representation of a row location, and the latter two coordinates as a base-$n$ representation of a column location. For example, in Figure
\ref{f:sudokusolution} the ``$4$" lying in the third row and rightmost column has location $(1,0,2,2)$.

 Families of mutually orthogonal sudoku solutions can be characterized by {\bf sudoku arrays}, which bear great similarity to ordered orthogonal arrays of type $\OOA  (4,s,2,n)$.
 For $n,s\in \Z ^+$ with $s\geq 3$, a sudoku array of type $\SA (s,n)$ consists of an $2s \times  n^4$ array whose entries are drawn from an alphabet of size $n$. Rows of the array are partitioned into $s$ bands, each of width $2$, and can be labeled lexicographically, just as for ordered orthogonal arrays.  A set $T$ of $4$ rows within this array is said to be {\bf sudoku top-justified} if it is top justified and, for $i\geq 3$, if the $(i,1)$-th row lies in $T$ then the $(i,2)$-th row also lies in $T$.\footnote{For sudoku top-justified sets $T$: In a given band below the top two row bands, either {\em both} rows lie in $T$, or neither of the two rows lie in $T$.} Regarding a set $T$ of  sudoku top-justified rows as a sub-array of the original array, we require that each $4$-tuple column of $T$ occurs exactly once in this subarray. Rows in the top two bands of the array are called {\bf location rows}. The bottom array in Figure \ref{f:sudaltarray} is an $\SA (4,2)$.

 Consider the two sudoku solutions at the top of Figure \ref{f:sudaltarray}. They are orthogonal, and together can be used to construct the sudoku array shown in the figure. The bold $2$ and bold $3$ both lie in location $(0,0,1,1)$. Writing $2$ and $3$ in base $n=2$ gives $(1,0)$ and $(1,1)$, respectively. If we concatenate the location and symbol data we obtain $(0,0,1,1,1,0,1,1)$, and this is precisely the bold column in the sudoku array. All other columns in the sudoku array are obtained similarly: the location rows correspond to locations in a sudoku solution, while the other rows correspond to symbols expressed in base $n$. In general we have the following analog of Theorem \ref{t:connections}, part (d):

 \begin{figure}[h]
 \begin{align*}
    &M_1 =\begin{array}{|cc|cc|}
    \hline
 0 & 1 & 3 & {\bf 2} \\
 2 & 3 & 1 & 0 \\
 \hline
 3 & 2 & 0 & 1 \\
 1 & 0 & 2 & 3 \\
 \hline
    \end{array}
    \qquad
   M_2= \begin{array}{|cc|cc|}
    \hline
 0 & 1 & 2 & {\bf 3} \\
 2 & 3 & 0 & 1 \\
 \hline
 1 & 0 & 3 & 2 \\
 3 & 2 & 1 & 0 \\
 \hline
    \end{array} \\
    &\begin{array}{|c |cccccccccccccccc|}
 \hline
{\rm row}& 0& 0 & 0 & {\bf 0 }& 0 & 0 & 0 & 0 & 1 & 1 & 1 & 1 & 1 & 1 & 1 & 1 \\
{\rm info}& 0& 0 & 0 & {\bf 0 } & 1 & 1 & 1 & 1 & 0 & 0 & 0 & 0 & 1 & 1 & 1 & 1 \\
 \hline
{\rm column}& 0& 0 & 1 & {\bf 1} & 0 & 0 & 1 & 1 & 0 & 0 & 1 & 1 & 0 & 0 & 1 & 1 \\
{\rm info}& 0& 1 & 0 & {\bf 1 } & 0 & 1 & 0 & 1 & 0 & 1 & 0 & 1 & 0 & 1 & 0 & 1 \\
 \hline
         M_1 & 0& 0 & 1 & {\bf 1} & 1 & 1 & 0 & 0 & 1 & 1 & 0 & 0 & 0 & 0 & 1 & 1 \\
{\rm symbols}& 0& 1 & 1 & {\bf 0 }& 0 & 1 & 1 & 0 & 1 & 0 & 0 & 1 & 1 & 0 & 0 & 1 \\
 \hline
         M_2 &  0& 0 & 1 & {\bf 1} & 1 & 1 & 0 & 0 & 1 & 0 & 1 & 1 & 1 & 1 & 0 & 0 \\
{\rm symbols}&  0& 1 & 0 & {\bf 1} & 0 & 1 & 0 & 1 & 1 & 0 & 1 & 0 & 1 & 0 & 1 & 0 \\
 \hline
 \end{array}
    \end{align*}
       \caption{Two orthogonal sudoku solutions and a corresponding sudoku array.}
       \label{f:sudaltarray}
       \end{figure}

\begin{thm}\label{t:equivtosudoku}
Let $s\geq 3$. The existence of an $\SA (s,n)$ is equivalent to the existence of $s-2$ mutually orthogonal sudoku solutions of order $n^2$.
\end{thm}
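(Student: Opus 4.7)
My plan is to exhibit an explicit bijection between $\SA(s,n)$'s and families of $s-2$ mutually orthogonal sudoku solutions of order $n^2$, following the construction already illustrated in Figure \ref{f:sudaltarray}. In one direction, given mutually orthogonal sudoku solutions $M_1,\dots,M_{s-2}$, I would build an array indexed by locations $(x_1,x_2,x_3,x_4)\in\Z_n^4$: the top two bands of a column record the row and column addresses in base $n$, and for each $k\geq 1$, band $k+2$ records the base-$n$ expansion of the entry of $M_k$ at that location. In the reverse direction, given an $\SA(s,n)$, I would use the defining property applied to the four location rows to see that the columns are in bijection with $\Z_n^4$, and then read off a candidate $M_k$ from band $k+2$.

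The heart of both directions is a uniform case analysis of the sudoku-top-justified $4$-subsets $T$ of rows. Since every band below the top two contributes either both or neither of its rows, and each of the top two bands contributes $0$ rows, only $(i,1)$, or both rows, $T$ falls into exactly five types: (A) both location bands complete; (B) band $1$ plus a single band $i\geq 3$; (C) band $2$ plus a single band $i\geq 3$; (D) the top halves $(1,1),(2,1)$ plus a single band $i\geq 3$; (E) two bands $i,j\geq 3$.

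I would then match each type to a combinatorial property of the $M_k$: type (A) corresponds to ``each location occurs exactly once,'' which is automatic from how columns are indexed; (B) and (C) correspond to the Latin row and column conditions on a single $M_k$; (E) corresponds to pairwise orthogonality between the two chosen sudoku solutions; and (D) corresponds to the subsquare condition, because $(x_1,x_3)$ identifies one of the $n^2$ canonical subsquares while the two rows from band $i\geq 3$ identify a symbol in base $n$. The requirement that each $4$-tuple occur exactly once in the subarray indexed by $T$ translates, in each case, to exactly one of these conditions on the $M_k$ side, so the equivalence reads off simultaneously in both directions.

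The main obstacle I anticipate is not any single verification -- each reduces to a routine base-$n$ unpacking once the encoding is fixed -- but rather making sure the five-case enumeration is exhaustive and identifying type (D) correctly as the place where the \emph{sudoku} condition (as opposed to the mere Latin-square condition) enters. Once that enumeration is in hand, the two directions of the equivalence become a matter of reading the same table top-to-bottom or bottom-to-top.
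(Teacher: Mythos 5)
Your proposal is correct and follows essentially the same route as the paper: the same base-$n$ encoding of locations and symbols, and the same exhaustive five-case enumeration of sudoku top-justified $4$-subsets (your types (A)--(E) are exactly the paper's five sets), each matched to the same combinatorial property of the $M_k$ (location uniqueness, Latin rows, Latin columns, the subsquare condition, and pairwise orthogonality). Your explicit justification that the columns of a given $\SA(s,n)$ are indexed bijectively by $\Z_n^4$ makes the reverse direction slightly more explicit than the paper's ``the process is reversible,'' but the argument is the same.
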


\begin{proof}
We describe how a set of $s-2$ mutually orthogonal sudoku solutions $\{M_1,\dots, M_{s-2}\}$ of order $n^2$ implies a sudoku array $A$ of type $\SA (s,n)$; it will be apparent that this process is reversible.

Let the entry in location $((i,k),m)$ of $A$ be denoted $a_{ikm}$, where $1\leq i\leq s$, $1\leq k\leq 2$, $1\leq m\leq n^4$, and $a_{ikm}\in \Z_n$.
According to earlier discussion, the $n^4$ locations of each sudoku solution can be described by $n^4$ distinct $4$-tuples of the form $(a_{11m},a_{12m},a_{21m},a_{22m})\in \Z_ n^4$. Together these $4$-tuples form the first four rows (two bands) of $A$. From top to bottom these location rows will be denoted $l_{r,1},l_{r,2},l_{c,1},l_{c,2}$.

Relabel the sudoku solutions so that the symbols are base-$n$ representations of the integers $\{0,1,\dots ,n^2-1\}$. (Note that relabeling does not affect orthogonality.) Let
$$
(\mu^i_1(a_{11m},a_{12m},a_{21m},a_{22m}),\mu ^i _2 (a_{11m},a_{12m},a_{21m},a_{22m}))
   $$
denote the symbol of $M_i$ lying in position
$(a_{11m},a_{12m},a_{21m},a_{22m})$ where $\mu ^i_1$ is the radix digit and $\mu ^i_2$ is the units digit. Then put $a_{i+2,j,m}= \mu ^{i} _j (a_{11m},a_{12m},a_{21m},a_{22m})$ for $1\leq i \leq s-2$. This completes the description of $A$.

To show that $A$ is a sudoku array we need to verify that every set $T$ of sudoku top-justified rows satisfies the non-repetition condition. There are several cases to consider, where $i,j$ are distinct members of $\{1,\dots ,s-2\}$ and $R_{i,k}$ denotes the $(i+2,k)$-th row of $A$:
\begin{itemize}
\item $T=\{R_{i,1},R_{i,2},R_{j,1},R_{j,2}\}$: Any repetition here indicates that the same ordered pair of symbols (one symbol from $M_{i}$ and another from $M_{j}$) appear in at least two distinct locations, contradicting the orthogonality of $M_{i}$ and $M_{j}$. (This case does not occur when $s=3$.)
\item $T=\{l_{r,1},l_{r,2},R_{i,1},R_{i,2}\}$: Any repetition here implies that $M_{i}$ has a repeated symbol in some row, a contradiction.
\item $T=\{l_{c,1},l_{c,2},R_{i,1},R_{i,2}\}$: Any repetition here implies  that $M_{i}$ has a repeated symbol in some column, a contradiction.
\item $T=\{l_{r,1},l_{r,2},l_{c,1},l_{c,2}\}$: Any repetition here indicates a repeated location, something that is ruled out in our hypothesis.
\item $T=\{l_{r,1},l_{c,1},R_{i,1},R_{i,2}\}$: Any repetition here implies repetition of a symbol within some subsquare of $M_{i}$, a contradiction.
\end{itemize}
We conclude that $A$ is a sudoku array.
\end{proof}

Observe that every ordered orthogonal array of type $\OOA _1 (4,s,2,n)$ is a sudoku array of type $\SA (s,n)$, and hence every ordered orthogonal array of this type determines a collection of $s-2$ mutually orthogonal sudoku solutions of order $n^2$ (Theorem \ref{t:equivtosudoku}). However, the reverse implication is false: for example, the array in Figure \ref{f:sudaltarray} fails to be an $\OOA (4,4,2,2)$, as can be seen by considering the set of four rows obtained by selecting the top row from each of the four bands.

\section{Strongly orthogonal sudoku}\label{s:so}
Here we describe additional conditions to be imposed on a family of $s-2$ mutually orthogonal sudoku solutions of order $n^2$ that are both necessary and sufficient for determining an $\OOA (4,s,2,n)$ (via the identification in the proof of Theorem \ref{t:equivtosudoku}). A set of mutually orthogonal sudoku solutions satisfying these additional conditions will be called {\bf strongly orthogonal}.

We first introduce more terminology:  If $M$ is a sudoku solution, the corresponding {\bf radix solution}, denoted $R(M)$, consists of the radix digits of the symbols of $M$. The {\bf radix large rows (large columns)}, {\bf radix subsquares}, and {\bf radix rows (columns)} of $M$ are the large rows (large columns), subsquares, and rows (columns) of $R(M)$, respectively (Figure \ref{f:radix}).
\begin{figure}[h]
$$
M=\begin{array}{|cc|cc|}
 \hline
 0 & 1 & 3 & 2 \\
 2 & 3 & 1 &  0 \\
 \hline
 1 & 0 & 2 & 3\\
 3 & 2 & 0 & 1\\
 \hline
    \end{array}
    =
 \begin{array}{|cc|cc|}
 \hline
  00 & 01 & 11 & 10 \\
  10 & 11 & 01 & 00 \\
  \hline
 01 & 00 & 10 & 11\\
 11 & 10 & 00 & 01\\
 \hline
    \end{array}
 \quad \Longrightarrow \quad
 R(M)= \begin{array}{|cc|cc|}
 \hline
 0 & 0 & 1 & 1 \\
 1 & 1 & 0 &  0 \\
 \hline
 0 & 0 & 1 & 1\\
 1 & 1 & 0 & 0\\
 \hline
    \end{array}
       $$
\caption{A sudoku solution and its corresponding radix solution.}
\label{f:radix}
\end{figure}
Finally, two radix solutions $R(M_i)$ and $R(M_j)$ may be superimposed to produce a third solution $N_{ij}$, called a {\bf composite solution}, whose symbols are ordered pairs. (These symbols may be considered numerals in base $n$.) $N_{ij}$ may or may not or may not be a sudoku solution (Figure \ref{f:Nij}).
\begin{figure}[h]
$$
 R(M_1)= \begin{array}{|cc|cc|}
 \hline
 0 & 0 & 1 & 1 \\
 1 & 1 & 0 &  0 \\
 \hline
 0 & 0 & 1 & 1\\
 1 & 1 & 0 & 0\\
 \hline
    \end{array} \quad
 R(M_2)= \begin{array}{|cc|cc|}
 \hline
 0 & 1 & 1 & 0 \\
 1 & 0 & 1 &  0 \\
 \hline
 1 & 0 & 0 & 1\\
 0 & 1 & 0 & 1\\
 \hline
    \end{array} \Longrightarrow
 N_{12}= \begin{array}{|cc|cc|}
 \hline
 00 & 01 & 11 & 10 \\
 11 & 10 & 01 &  00 \\
 \hline
 01 & 00 & 10 & 11\\
 10 & 11 & 00 & 01\\
 \hline
    \end{array}
    =
 \begin{array}{|cc|cc|}
 \hline
 0 & 1 & 3 & 2 \\
 3 & 2 & 1 &  0 \\
 \hline
 1 & 0 & 2 & 3\\
 2 & 3 & 0 & 1\\
 \hline
    \end{array}
       $$
\caption{Two radix solutions and the corresponding composite solution, which, in this case, is a sudoku solution.}
\label{f:Nij}
\end{figure}

Applying the definition of top-justified (see Introduction) and extracting the sudoku top-justified rows described in the proof of Theorem \ref{t:equivtosudoku}, we obtain:
\begin{lem}\label{l:remaining}
In an $\SA (s,n)$, any set of top-justified rows that is {\em not} sudoku top-justified has exactly one of the following forms listed in {\rm (1a)} through {\rm (4a)} below, where throughout
$i,j,k,l$ are distinct members of $\{1,\dots, s-2\}$  and $R_{i,1}, R_{i,2}$ are as in Theorem \ref{t:equivtosudoku}. The array below is merely a visual representation of the sets listed in {\rm (1a)} through {\rm (4a)}, and no order is implied for indices $i,j,k,l$.

\begin{minipage}{2.5in}
\begin{itemize}
\item[(1a)] $\{l_{r,1},l_{c,1},l_{c,2},R_{i,1}\}$
\item[(1b)] $\{l_{r,1}, l_{r,2},l_{c,1},R_{i,1}\}$
\medskip

\item[(2a)] $\{l_{r,1}, l_{c,1},R_{i,1},R_{j,1}\}$
\item[(2b)] $\{l_{r,1}, l_{r,2},R_{i,1},R_{j,1}\}$
\item[(2c)] $\{l_{c,1}, l_{c,2},R_{i,1},R_{j,1}\}$
\item[(2d)] $\{l_{c,1}, R_{i,1},R_{i,2},R_{j,1}\}$
\item[(2e)] $\{l_{r,1}, R_{i,1},R_{i,2},R_{j,1}\}$
\medskip

\item[(3a)] $\{l_{r,1}, R_{i,1},R_{j,1},R_{k,1}\}$
\item[(3b)] $\{l_{c,1}, R_{i,1},R_{j,1},R_{k,1}\}$
\item[(3c)] $\{R_{i,1}, R_{j,1},R_{k,1},R_{k,2}\}$
\medskip

\item[(4a)] $\{R_{i,1}, R_{j,1},R_{k,1},R_{l,1}\}$
\end{itemize}
\end{minipage}
\begin{minipage}{2.5in}
$\begin{array}{|c|cc|ccccc|ccc|c|}
\hline
\ &1a & 1b & 2a & 2b & 2c & 2d & 2e & 3a & 3b & 3c & 4a \\
\hline
{\rm row }& * & * & * & * & \ & \ & * & * & \ & \ & \ \\
{\rm locs}& \ & * & \ & * & \ & \ & \ & \ & \ & \ & \ \\
 \hline
{\rm col}& * & * & * & \ & * & * & \ & \ & * & \ & \ \\
{\rm locs}& * & \ & \ & \ & * & \ & \ & \ & \ & \ & \ \\
 \hline
i{\rm -th}& * & * & * & * & * & * & * & * & * & * & *  \\
{\rm band}& \ & \ & \ & \ & \ & \ & \ & \ & \ & \ & \  \\
 \hline
j{\rm -th}& \ & \ & * & * & * & * & * & * & * & * & *  \\
{\rm band}& \ & \ & \ & \ & \ & * & * & \ & \ & \ & \  \\
 \hline
k{\rm -th}& \ & \ & \ & \ & \ & \ & \ & * & * & * & *  \\
{\rm band}& \ & \ & \ & \ & \ & \ & \ & \ & \ & * & \  \\
 \hline
l{\rm -th}& \ & \ & \ & \ & \ & \ & \ & \ & \ & \ & *  \\
{\rm band}& \ & \ & \ & \ & \ & \ & \ & \ & \ & \ & \  \\
 \hline
 \end{array}
   $
\end{minipage}

\noindent Further, non-repetition of $4$-tuples implied by each of the applicable top-justified sets listed above is necessary and sufficient to declare an $\SA (s,n)$ to be an $\OOA (4,s,2,n)$. (Here a top-justified set of rows is applicable if $s$ is large enough to make its existence possible. For example, $s\geq 5$ is required for sets {\rm (3a)} through {\rm (4a)} to exist.)
\end{lem}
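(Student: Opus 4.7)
The plan is to reduce the enumeration to a finite, elementary combinatorial case split, then check the necessary-and-sufficient clause directly against the definition of $\OOA$. I would encode a top-justified $4$-row set by its \emph{profile} $(b_1,\dots,b_s)$, where $b_i\in\{0,1,2\}$ records how many rows from the $i$-th band are included; top-justification forces that $b_i=1$ means only the upper row of band $i$ is taken, so there is no further freedom in choosing which of the two. A set fails to be sudoku top-justified precisely when some band $i\geq 3$ has $b_i=1$, so the collections listed in (1a)--(4a) should correspond exactly to profiles with $\sum b_i=4$, $b_1,b_2\in\{0,1,2\}$, and at least one $b_i=1$ for some $i\geq 3$.

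Next I stratify by $k:=|\{i\geq 3:b_i=1\}|\in\{1,2,3,4\}$ and $m:=|\{i\geq 3:b_i=2\}|\in\{0,1\}$, writing $(a,c):=(b_1,b_2)$. The equation $a+c+k+2m=4$ then admits only a handful of solutions. Working through them: $k=1, m=0$ forces $a+c=3$, giving (1a) and (1b); $k=1, m=1$ forces $a+c=1$, giving (2d) and (2e); $k=2, m=0$ forces $a+c=2$, giving (2a)--(2c); $k=2, m=1$ forces $a=c=0$, giving (3c); $k=3, m=0$ forces $a+c=1$, giving (3a) and (3b); and $k=4, m=0$ forces $a=c=0$, giving (4a). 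The sub-split within each family is determined by the allowed values of $(a,c)$. As a cross-check, this yields $2+2+3+1+2+1=11$ configurations, which matches the list and the visual chart in the statement (each column of the chart is literally the profile of the corresponding case).

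For the necessary-and-sufficient claim, the definition of $\OOA(4,s,2,n)$ demands non-repetition on \emph{every} top-justified $4$-row subarray, while the proof of Theorem~\ref{t:equivtosudoku} already verifies non-repetition on every sudoku top-justified $4$-row set as soon as one has an $\SA(s,n)$. Hence the extra conditions needed to promote an $\SA(s,n)$ to an $\OOA(4,s,2,n)$ are exactly non-repetition on the remaining top-justified $4$-row sets, which are precisely those enumerated in (1a)--(4a). "Applicability" merely records that (2a)--(2e), (3a)--(3c), and (4a) need at least two, three, and four distinct indices among $\{1,\dots,s-2\}$ respectively, so may be vacuous for small $s$. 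The only real obstacle is bookkeeping, namely ensuring the case split is complete and each named case is hit exactly once; aligning the profile $(a,c,k,m)$ with the author's chart makes this verification transparent.
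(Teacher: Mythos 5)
Your proposal is correct and is essentially the paper's (omitted) argument made explicit: the paper simply asserts the enumeration as following from the definitions, and your profile bookkeeping $(a,c,k,m)$ with $a+c+k+2m=4$, $k\geq 1$ is a clean, complete way to carry it out, yielding exactly the eleven listed cases; the necessary-and-sufficient clause then follows, as you say, because the top-justified $4$-row sets split disjointly into the sudoku top-justified ones (already handled by the $\SA(s,n)$ definition) and the listed ones.
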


Now we translate the non-repetition conditions described in Lemma \ref{l:remaining} to conditions on the corresponding collection of mutually orthogonal sudoku solutions obtained from an $\SA (s,n)$ via Theorem \ref{t:equivtosudoku}.

\begin{prop}\label{p:conditions}
Let $s,n\in \Z ^+$ with $s\geq 3$, and let $i,j,k,l\in \{1,\dots ,s-2\}$ be distinct. A collection $\{M_1,\dots ,M_{s-2}\}$ of mutually orthogonal sudoku solutions of order $n^2$ is strongly orthogonal if and only if the following conditions are satisfied for each choice of $i,j,k,l$:
\begin{itemize}
\item[(i)] Whenever $s\geq 3$ the subsquares of $R(M_i)$ are latin squares.
\item[(ii)] Whenever $s\geq 4$,
     \begin{itemize}
     \item[(a)] the composite solution $N_{ij}$ is a sudoku solution.
     \item[(b)] corresponding large rows of $R(M_i)$ and $M_j$ are orthogonal.
     \item[(c)] corresponding large columns of $R(M_i)$ and $M_j$ are orthogonal.
     \end{itemize}
\item[(iii)] Whenever $s\geq 5$,
     \begin{itemize}
     \item[(a)] corresponding large rows of  $N_{ij}$ and $R(M_k)$ are orthogonal.
     \item[(b)]  corresponding large columns of $N_{ij}$ and $R(M_k)$ are orthogonal.
     \item[(c)] $N_{ij}$ and $M_k$ are orthogonal.
     \end{itemize}
\item[(iv)] Whenever $s\geq 6$ the composite solutions $N_{ij}$ and $N_{kl}$ are orthogonal.
\end{itemize}
\end{prop}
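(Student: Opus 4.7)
The plan is to convert each of the eleven cases in Lemma \ref{l:remaining} directly into a statement about the sudoku solutions $M_i$ and their radix digits, matching them one-by-one with the conditions in the Proposition. Recall from the proof of Theorem \ref{t:equivtosudoku} that in the $\SA(s,n)$ associated with $\{M_1,\ldots, M_{s-2}\}$ the row $R_{i,1}$ records the radix digit of $M_i$ at each cell (giving $R(M_i)$) while $R_{i,2}$ records the units digit. Because the array has $n^4$ columns, the non-repetition requirement for a $4$-row top-justified set means that the corresponding $4$-tuple attains every element of $\Z_n^4$ exactly once; translating each such ``every tuple appears'' statement back into the sudoku world is the entire task.

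I would first handle the cases in group (1). Fixing the location coordinates in (1a) selects a single column inside a subsquare of $M_i$, and the requirement is that the radix digit $R(M_i)$ take each value of $\Z_n$ as the free coordinate varies; (1b) is the analogous statement for a row inside a subsquare. Jointly, (1a) and (1b) are exactly condition (i), that every subsquare of $R(M_i)$ is a latin square. Next, the three cases (2a), (2b), (2c) say that, restricted to each subsquare, each row, and each column of the grid, respectively, the pair $(R(M_i), R(M_j))$ exhausts $\Z_n^2$; this is precisely the statement that the composite $N_{ij}$ is a sudoku solution, giving (ii)(a). The remaining cases (2d) and (2e) say that inside every large column (respectively large row) the tuple consisting of an $M_i$-symbol and the radix digit $R(M_j)$ hits every element of $\Z_{n^2}\times \Z_n$; this is the orthogonality of corresponding large columns (respectively large rows) of $R(M_j)$ and $M_i$, so (after interchanging the roles of $i,j$, which is harmless since the conditions are to hold for all distinct ordered pairs) we obtain (ii)(b) and (ii)(c).

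Cases at level (3) are analogous, with $N_{ij}$ playing the role that $R(M_i)$ played at level (2): (3a) and (3b) say that corresponding large rows (respectively large columns) of $N_{ij}$ and $R(M_k)$ are orthogonal, which is (iii)(a), (iii)(b); (3c) says the same for $N_{ij}$ and $M_k$ over the whole grid, giving (iii)(c). Finally, (4a) demands that $(R(M_i),R(M_j),R(M_k),R(M_l))$ attain every element of $\Z_n^4$, which is exactly the orthogonality of $N_{ij}$ and $N_{kl}$, i.e.\ condition (iv). Every translation is reversible, so combining the cases yields the stated ``if and only if.''

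The main obstacle is not technical but organizational: one must keep straight which coordinates are fixed and which are free in each of the eleven cases, and be careful that the counting ``$n^4$ columns in the array versus $n^4$ possible $4$-tuples'' always lines up so that ``every tuple appears at least once'' is the same as ``exactly once.'' The enumeration in Lemma \ref{l:remaining} has already done all the real combinatorial work, so beyond this careful bookkeeping no further ideas are needed.
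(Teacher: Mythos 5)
Your proposal is correct and follows essentially the same route as the paper: a case-by-case translation of the eleven top-justified sets of Lemma \ref{l:remaining} into statements about radix digits, subsquares, large rows/columns, and composite solutions, matching (1a)--(1b) to (i), (2a)--(2c) to (ii)(a), (2d)--(2e) to (ii)(b)--(c), the level-(3) sets to (iii), and (4a) to (iv). Your explicit remark about interchanging the roles of $i$ and $j$ in cases (2d)--(2e) is a point the paper glosses over, but otherwise the arguments coincide.
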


\begin{proof}
Throughout we refer to the sets listed in Lemma \ref{l:remaining}. Observe that non-repetition of 4-tuples for sets of type (1a) and (1b) means that radix subsquares of $M_{i}$ must be latin squares, thus yielding item (i) above. Non-repetition of $4$-tuples for sets of type (2a), (2b), and (2c) says that corresponding  subsquares, rows, and columns of $R(M_{i})$ and $R(M_{j})$ must be orthogonal, respectively. In turn this means that the symbols for $N_{ij}$ are not repeated in any subsquare, row, or column, so we obtain item (ii)(a) above. Non-repetition of $4$-tuples in sets (2e) and (2f) implies that radix large rows of $M_{i}$ are orthogonal to ordinary large rows of $M_{j}$, and similarly for large columns, thus giving items (ii)(b) and (ii)(c) above. Non-repetition of $4$-tuples for set (3a) is equivalent to requiring corresponding large rows of $R(M_i)$, $R(M_j)$, and $R(M_k)$ to be orthogonal (here meaning no repetition of 3-tuples of symbols, {\em not} pairwise orthogonality). This is the same as saying that corresponding large rows of $N_{ij}$ and $R(M_k)$ are orthogonal, yielding (iii)(a) above. Similarly, non-repetition in set (3b) yields (iii)(b) above. Non-repetition of $4$-tuples in set (3c) says that $R(M_i)$, $R(M_j)$, and $M_k$ are orthogonal (again, we mean no repetition of symbol triplets), which is equivalent to requiring $N_{ij}$ and $M_k$ to be orthogonal, and so we obtain item (iii)(c) above. Finally, non-repetition in set (3d) says that $R(M_i)$, $R(M_j)$, $R(M_k)$, and $R(M_l)$ are orthogonal (meaning no repetition of symbol 4-tuples), and this is equivalent to requiring orthogonality of $N_{ij}$ and $N_{kl}$, giving item (iv) above.
\end{proof}

According to the proof of Proposition \ref{p:conditions}, any collection of corresponding radix subsquares within a set of strongly orthogonal sudoku solutions must be pairwise mutually orthogonal latin squares. Since there are at most $n-1$ mutually orthogonal latin squares of order $n$, this gives a crude upper bound on the size of a strongly orthogonal family of order-$n^2$ sudoku solutions:

\begin{cor}\label{c:crude}
The size of a strongly orthogonal family of sudoku solutions of order $n^2$ is no larger than $n-1$. Equivalently, if $\OOA (4,s,2,n)$ exists we must have $s\leq n+1$.
\end{cor}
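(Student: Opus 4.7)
The plan is to reduce the bound to the classical maximum‐number‐of‐MOLS bound $N(n)\le n-1$ by extracting, from any strongly orthogonal family, a set of $s-2$ pairwise orthogonal latin squares of order $n$.

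First I would fix an arbitrary subsquare position (i.e., one of the $n^2$ canonical $n\times n$ cells in the $n^2\times n^2$ grid) and, for each $M_i$ in the strongly orthogonal family, look at the corresponding subsquare of the radix solution $R(M_i)$. Call this $n\times n$ block $S_i$. By Proposition \ref{p:conditions}(i), each $S_i$ is a latin square of order $n$ on the alphabet $\{0,1,\dots,n-1\}$.

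Next I would show that for distinct $i,j\in\{1,\dots,s-2\}$ the squares $S_i$ and $S_j$ are orthogonal. When $s\ge 4$, Proposition \ref{p:conditions}(ii)(a) says the composite $N_{ij}$, obtained by superimposing $R(M_i)$ and $R(M_j)$, is itself a sudoku solution of order $n^2$ on the symbol set $\Z_n\times\Z_n$. The subsquare of $N_{ij}$ at our chosen position is therefore an $n\times n$ block in which each of the $n^2$ ordered pairs in $\Z_n\times\Z_n$ appears exactly once, which is precisely the statement that $S_i$ and $S_j$ are orthogonal as latin squares. Hence $\{S_1,\dots,S_{s-2}\}$ is a family of pairwise mutually orthogonal latin squares of order $n$.

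Finally I would invoke the classical bound $N(n)\le n-1$ mentioned in the introduction to conclude $s-2\le n-1$, i.e.\ $s\le n+1$. The case $s=3$ (where condition (ii) is vacuous) requires only $1\le n-1$, which holds for all $n\ge 2$ (the only cases where order-$n^2$ sudoku is nontrivial). There is essentially no obstacle here: the content of the corollary is already packaged inside Proposition \ref{p:conditions}, and the argument is a one-line extraction of a MOLS family from the strongly orthogonal data, followed by citing the MOLS upper bound.
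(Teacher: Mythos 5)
Your proof is correct and follows essentially the same route as the paper: it extracts the family of corresponding radix subsquares, notes via condition (i) that each is latin and via condition (ii)(a) (the composite $N_{ij}$ being a sudoku solution) that they are pairwise orthogonal, and then cites the classical bound $N(n)\le n-1$ to get $s-2\le n-1$. The paper compresses exactly this argument into the sentence preceding the corollary, so there is nothing to add.
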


One consequence of Corollary \ref{c:crude} is that arrays of type $\OOA (4,4,2,2)$ do not exist, so no sudoku array of type $\SA (4,2)$, such as that shown in Figure \ref{f:sudaltarray}, can ever be an ordered orthogonal array.

\section{Linear sudoku and flags}\label{s:linear}
Our overall goal is to construct ordered orthogonal arrays of type $\OOA (4,s,2,n)$ by constructing a families of $s-2$ strongly orthogonal sudoku solutions of order $n^2$. This means we want to produce a family of $s-2$ pairwise mutually orthogonal sudoku solutions that satisfy the conditions listed in Proposition \ref{p:conditions}. In this section we introduce a framework, known as {\bf linear sudoku}, that we will use later to construct strongly orthogonal families of sudoku solutions. Proofs have been omitted for results in this section that have been carefully described elsewhere, such as in \cite{rB08} and \cite{jL10}.

\subsection{Linear sudoku}
Let $\F$ be the finite field of order
$q$. Hereafter all linear and matrix algebra will be performed over $\F$. The set of locations within a sudoku solution of order $q^2$ can be
identified with the vector space $\F ^4$ over $\F$. This identification constitutes a slight variation of that
given in Section \ref{s:sudokuarray}, where $\F$ now takes the role of $\Z _n$: Each
location has an address $(x_1,x_2,x_3,x_4)$ (denoted $x_1x_2x_3x_4$ hereafter), where $x_1$ and $x_3$
denote the large row and column of the location, respectively,
while $x_2$ and $x_4$ denote the row within a large row and column within a large column of the
location, respectively. Rows can be labeled in
increasing lexicographic order from top to bottom starting from zero, while columns are labeled in increasing lexicographic order from left to right (see
Figure \ref{f:locationex}).

\begin{figure}[h]
      $$ {\small \begin{array}{| c  c  c | c c c | c c c|}
   \hline
      0 & 1 & 2 &  4 & 5 &3 & 8 & 6 & 7 \\
      3 & 4 & 5 &  7 & 8 &6 & 2 & 0 & 1^* \\
      6 & 7 & 8 &  1 & 2 &0 & 5 & 3 & 4 \\ \hline
      1 & 2 & 0 &  5 & 3 &4 & 6 & 7 & 8 \\
      4 & 5 & 3 &  8 & 6 &7 & 0 & 1 & 2 \\
      7 & 8 & 6 &  2 & 0 &1 & 3 & 4 & 5 \\ \hline
      2 & 0 & 1 &  3 & 4 &5 & 7 & 8 & 6 \\
      5 & 3 & 4 &  6 & 7 &8 & 1 & 2 & 0 \\
      8 & 6 & 7 &  0 & 1 &2 & 4 & 5 & 3 \\ \hline
    \end{array}}
      $$ 
       \caption{A linear sudoku solution generated by $\pt{1002,0212}$ with
       asterisked symbol in location $0122$.}  
       \label{f:locationex}
       \end{figure}

We say that a sudoku solution is {\bf linear} if the collection of locations housing any given symbol
is a coset of some two-dimensional vector subspace of $\F^4$. Linear sudoku solutions come in two flavors:
 If every such coset originates from a {\em single} two-dimensional
subspace, then the solution is of {\bf parallel type}; otherwise the
solution is of {\bf non-parallel type}. In this article we focus only
on linear sudoku solutions of parallel type, which we hereafter refer to simply as linear sudoku solutions. For example, the sudoku solution implied by Figure
\ref{f:locationex} is linear (of parallel type), generated by
$g=\pt{1002,0212}\subset \Z _3^4$.

In order to generate a
linear sudoku solution we require that cosets of $g$ intersect
each row, column, and subsquare exactly once:

\begin{prop} \label{p:sudokucharacterization}
{\rm \cite{rB08},\cite{jL10}} A two-dimensional subspace $g$ of $\F ^4$ generates a linear sudoku solution (up to labels) if and only if $g$ has trivial intersection with
$\pt{1000,0100}$, $\pt{0010,0001}$, and  $\pt{0100,0001}$.
\end{prop}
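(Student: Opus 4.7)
The plan is to translate the three geometric conditions defining a sudoku solution (each symbol appears exactly once in each row, column, and subsquare) into the single linear-algebra statement about trivial intersections. Since the solution is assumed to be of parallel type, the set of locations housing any particular symbol is a single coset of the generating subspace $g$; so the entire characterization reduces to showing that each coset of $g$ meets each row, column, and subsquare exactly once.

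First I would identify the relevant subspaces under the coordinatization $x_1x_2x_3x_4$. A row is the set of positions with $(x_1,x_2)$ fixed, which is precisely a coset of $\pt{0010,0001}$; a column is a coset of $\pt{1000,0100}$; and a subsquare, determined by fixing the large-row index $x_1$ and the large-column index $x_3$, is a coset of $\pt{0100,0001}$. These are exactly the three subspaces appearing in the statement, so the theorem will follow once I know that for each of them $g$ meets every coset in exactly one point iff it has trivial intersection with the subspace.

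The main step is therefore a general dimension lemma: if $g$ and $H$ are two-dimensional subspaces of $\F ^4$, then every coset of $g$ meets every coset of $H$ in exactly one point if and only if $g\cap H=\{0\}$. If $g\cap H=\{0\}$, then $g+H=\F ^4$ by dimension, so every coset $v+g$ meets every coset $w+H$; any such intersection, being either empty or a coset of $g\cap H=\{0\}$, must consist of a single point. Conversely, if $g\cap H\neq \{0\}$, then either $g+H\neq \F ^4$, forcing some pair of cosets to be disjoint, or else they intersect in a coset of $g\cap H$ of size $|\F |>1$; in either case the single-point condition fails. A quick counting check using $|g|\cdot |H|=q^4=|\F ^4|$ confirms the bookkeeping.

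Applying the lemma once to each of the three subspaces $\pt{0010,0001}$, $\pt{1000,0100}$, and $\pt{0100,0001}$ yields the proposition. I do not anticipate a serious obstacle: essentially the only content is recognizing that the coordinate structure of the sudoku board matches these three specific two-dimensional subspaces, after which the result is a standard application of the second isomorphism theorem for subspaces.
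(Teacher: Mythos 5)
Your proof is correct and is essentially the intended argument: the paper omits the proof (citing \cite{rB08} and \cite{jL10}), and your key coset-intersection lemma is precisely the fact the paper itself invokes later as ``Lemma 3.1 of \cite{jL10}.'' The identification of rows, columns, and subsquares with cosets of $\pt{0010,0001}$, $\pt{1000,0100}$, and $\pt{0100,0001}$ respectively is exactly right, and the dimension/counting argument closes the equivalence.
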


All linear sudoku solutions can be represented by
$2\times 2$ matrices. If  $A,B\in \MF$ we let $\left[
\begin{array}{c} A
\\ B
\end{array}\right]$ denote the subspace of $\F ^4$ spanned by the
columns of the matrix $\begin{pmatrix} A \\ B\end{pmatrix}$. Also
let $I$ denote the $2\times 2$ identity matrix. In consideration
of Proposition \ref{p:sudokucharacterization}, we have the
following:

\begin{prop}\label{p:sudmatrep} {\rm \cite{jL10}}
A two-dimensional subspace $g$ of $\F ^4$ generates a linear sudoku solution (up to labels) if and only if there exists a non-lower triangular
invertible matrix $\Gamma $ such that $g=\left[ \begin{array}{c} I\\ \Gamma
\end{array} \right]$.
\end{prop}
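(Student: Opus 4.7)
The plan is to derive the matrix form directly from the three trivial-intersection conditions supplied by Proposition \ref{p:sudokucharacterization}, handling each condition in turn.

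First I would use the condition that $g \cap \pt{0010,0001} = \{0\}$. Since $\pt{0010,0001}$ is precisely the kernel of the projection $\pi_{12}: \F^4 \to \F^2$ onto the first two coordinates, this says $\pi_{12}$ restricted to $g$ is injective; as $\dim g = 2$, it is an isomorphism. Consequently there is a unique basis of $g$ whose images under $\pi_{12}$ are the standard basis vectors of $\F^2$, and writing this basis as the columns of a $4\times 2$ matrix yields $g = \left[\begin{array}{c} I \\ \Gamma \end{array}\right]$ for a uniquely determined $2\times 2$ matrix $\Gamma$.

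Next I would translate the condition $g \cap \pt{1000,0100} = \{0\}$. Here $\pt{1000,0100}$ is the kernel of the projection $\pi_{34}$ onto the last two coordinates, and restricted to $g$ that projection sends each column of $\begin{pmatrix} I \\ \Gamma\end{pmatrix}$ to the corresponding column of $\Gamma$. So triviality of the intersection is equivalent to the two columns of $\Gamma$ being linearly independent, i.e., to $\Gamma$ being invertible.

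Finally I would handle $g \cap \pt{0100,0001} = \{0\}$. A general element of $g$ has the form $(a,\ b,\ a\gamma_{11}+b\gamma_{12},\ a\gamma_{21}+b\gamma_{22})^T$, and lies in $\pt{0100,0001}$ precisely when its first and third coordinates vanish. The first-coordinate equation forces $a=0$, after which the third-coordinate equation reduces to $b\gamma_{12}=0$. This forces $b=0$ for every choice of $b$ if and only if $\gamma_{12}\neq 0$, i.e., if and only if $\Gamma$ is not lower triangular. The converse direction simply reverses these three equivalences, so assembling them yields both implications of the proposition.

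I do not anticipate any serious obstacle; the proof is a straightforward linear-algebra unpacking. The most delicate bookkeeping step is the third one, where one has to be careful that the condition $\gamma_{12}\neq 0$ (upper-right entry) corresponds to $\Gamma$ being \emph{not} lower triangular, rather than mis-attributing the constraint to a different entry.
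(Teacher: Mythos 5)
Your proof is correct, and it is exactly the derivation the paper intends: the paper omits the proof (citing \cite{jL10}) but states that the result follows ``in consideration of Proposition \ref{p:sudokucharacterization},'' which is precisely the three-condition unpacking you carry out. All three translations check out, including the delicate point that triviality of $g\cap\pt{0100,0001}$ forces the $(1,2)$ entry of $\Gamma$ to be nonzero, i.e.\ $\Gamma$ is not lower triangular.
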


There is a simple geometric condition that characterizes
orthogonality of parallel linear sudoku solutions:

\begin{prop} \label{p:orthogcond} {\rm \cite{rB08},\cite{jL10}}
Let $M_{g},M_{h}$ be linear sudoku solutions generated by two-dimensional subspaces $g,h$ of $\F ^4$, respectively.
The two solutions are orthogonal if and only if $g$ and $h$ have trivial intersection.
\end{prop}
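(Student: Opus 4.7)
The plan is to reduce orthogonality to a statement about coset intersections, then apply the standard dimension formula for subspaces of $\F^4$.

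First, I would exploit the linear structure to give a clean labeling of symbols. Because $M_g$ is a linear (parallel-type) sudoku solution, the $q^2$ symbols of $M_g$ can be identified with the $q^2$ cosets of $g$ in $\F^4$, where a location $z \in \F^4$ is assigned the symbol $z + g$; the analogous identification is used for $M_h$. With these labels, asking whether a particular ordered symbol pair $(x+g, y+h)$ appears at some location becomes the purely geometric question: does $(x+g) \cap (y+h) \neq \emptyset$?

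Next I would reduce orthogonality to a single subspace-sum condition. There are $q^4$ total ordered pairs of symbols and $q^4$ locations in the superimposed square, so $M_g$ and $M_h$ are orthogonal if and only if every ordered pair $(x+g, y+h)$ appears at least once. By the previous paragraph, this happens iff for every $x,y \in \F^4$ there exist $u \in g$, $v \in h$ with $x+u = y+v$, equivalently $y - x \in g + h$. Since this must hold for all $x,y$, orthogonality is equivalent to $g + h = \F^4$.

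Finally I would close the argument with the standard dimension formula. Since $\dim g = \dim h = 2$, we have
\[
\dim(g+h) \;=\; \dim g + \dim h - \dim(g \cap h) \;=\; 4 - \dim(g \cap h),
\]
so $g + h = \F^4$ if and only if $g \cap h = \{0\}$. Combining this with the previous step gives the claimed equivalence.

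There is really no single hard step; the only thing to be careful about is the translation from the combinatorial definition of orthogonality (each ordered symbol pair appears exactly once) into the statement that all pairs of cosets meet, which one gets via the counting observation that "at least once" forces "exactly once" when the totals match. Everything else is linear algebra bookkeeping, and no facts beyond Proposition \ref{p:sudokucharacterization} (to ensure both $M_g$ and $M_h$ are genuine sudoku solutions) are needed.
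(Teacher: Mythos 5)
Your proof is correct and is essentially the standard argument: the paper omits a proof here (deferring to \cite{rB08} and \cite{jL10}), and those sources use the same reduction of orthogonality to the statement that every coset of $g$ meets every coset of $h$, settled by $g+h=\F^4 \iff g\cap h=\{0\}$ via the dimension formula. Your counting step (surjectivity forces bijectivity since both sides have size $q^4$) is a valid shortcut; one could equally note directly that when $g\cap h=\{0\}$ each nonempty intersection $(x+g)\cap(y+h)$ is a coset of $g\cap h$ and hence a single point.
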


Proposition \ref{p:orthogcond} together with Proposition
\ref{p:sudmatrep} imply

\begin{cor}\label{c:orthogcond}
Two-dimensional subspaces $g_1=\left[ \begin{array}{c} I\\ \Gamma _1
\end{array} \right]$ and $g_2=\left[ \begin{array}{c} I\\ \Gamma _2
\end{array} \right]$ of $\F ^4$ generate orthogonal linear sudoku solutions
if and only if $\Gamma _1,\Gamma _2$ satisfy the conditions of
Proposition \ref{p:sudmatrep} and  $\det (\Gamma _1-\Gamma _2)\ne 0$.
\end{cor}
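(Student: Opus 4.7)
The plan is to combine Propositions \ref{p:sudmatrep} and \ref{p:orthogcond} with a direct computation of when the two two-dimensional subspaces $g_1,g_2$ meet trivially. The ``generates a linear sudoku solution'' half is immediate from Proposition \ref{p:sudmatrep} applied to each of $g_1,g_2$, so the real content is translating the condition $g_1\cap g_2=\{0\}$ of Proposition \ref{p:orthogcond} into a statement about $\Gamma_1-\Gamma_2$.

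First I would parametrize points in each subspace. Since $g_i$ is the column span of $\begin{pmatrix}I\\\Gamma_i\end{pmatrix}$, a general element of $g_i$ has the form $\begin{pmatrix}v\\ \Gamma_i v\end{pmatrix}$ for some $v\in\F^2$. Thus a vector lies in $g_1\cap g_2$ iff there exist $v,w\in\F^2$ with $v=w$ (top two coordinates) and $\Gamma_1 v=\Gamma_2 w$ (bottom two coordinates), which collapses to $(\Gamma_1-\Gamma_2)v=0$.

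From this it is immediate that $g_1\cap g_2=\{0\}$ iff the only $v\in\F^2$ satisfying $(\Gamma_1-\Gamma_2)v=0$ is $v=0$, i.e.\ iff $\Gamma_1-\Gamma_2$ is invertible, i.e.\ $\det(\Gamma_1-\Gamma_2)\ne 0$. Combining this equivalence with Proposition \ref{p:orthogcond} yields the orthogonality statement, and combining with Proposition \ref{p:sudmatrep} (applied to each $g_i$ separately) yields the sudoku conditions on $\Gamma_1$ and $\Gamma_2$.

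There is no genuine obstacle here; the only thing to be careful about is making the parametrization of $g_i$ by its first two coordinates explicit so that the identification of $g_1\cap g_2$ with $\ker(\Gamma_1-\Gamma_2)$ is unambiguous. The whole argument should fit in a few lines.
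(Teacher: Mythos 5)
Your argument is correct and is exactly the computation the paper leaves implicit when it states that Propositions \ref{p:sudmatrep} and \ref{p:orthogcond} together imply the corollary: parametrizing $g_i$ as $\left\{\begin{pmatrix}v\\ \Gamma_i v\end{pmatrix} : v\in\F^2\right\}$ identifies $g_1\cap g_2$ with $\ker(\Gamma_1-\Gamma_2)$, so trivial intersection is equivalent to $\det(\Gamma_1-\Gamma_2)\ne 0$. No issues.
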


\subsection{Sudoku flags}
To produce strongly orthogonal families of sudoku solutions, it is desirable to have more control over the location of radix symbols than is provided by the linear sudoku framework presented above. (The need for control over the locations of radix symbols is evidenced by Proposition \ref{p:conditions}.) To place more control on the location of radix symbols, we introduce the notion of a {\bf sudoku flag}.

 Here a {\bf flag} $Z$ is a pair $(g,V)$ of subspaces of $\F^4$ where $g$ is two-dimensional, $V$ is three-dimensional, and $g\subset V$. If $Z=(g,V)$ and $g$ generates a linear sudoku solution, then we say $Z$ is a {\bf sudoku flag}. Each sudoku flag $Z$ generates a linear sudoku solution $M_{Z}$, where an assignment of symbols $\{0,1,\dots q^2-1\}$ to locations must satisfy:
 \begin{itemize}
 \item each coset of $g$ houses exactly one symbol, and
 \item each coset of $V$ houses exactly one radix symbol.
 \end{itemize}
These labeling restrictions may be rephrased as follows: symbols in $M_Z$ lie in cosets of $g$, while symbols in $R(M_Z)$ lie in cosets of $V$. If $v_1,v_2,v_3\in \F^4$ with $g=\langle v_1,v_2\rangle$ and $V=\langle v_1,v_2,v_3\rangle$, the corresponding flag $Z=(g,V)$ will be denoted $[v_1\ v_2\ v_3]$.

To illustrate a sudoku flag $Z$ and corresponding sudoku solution $M_Z$, let $\F =\Z _3$ and put
$$
Z=(g,V)=\left[ \begin{array}{ccc} 1 & 0 & 0 \\ 0 & 1 & 1 \\ 1 & 1 & 0 \\ 0 & 2 & 2 \end{array} \right].
    $$
Assigning the radix symbols $0,1,2$ to the three cosets of $V=\langle 1010, 0112,0102\rangle$ in $\Z _3^4$, we obtain, for example,
$$
R(M_Z)=\begin{array}{| c  c  c | c c c | c c c|}
   \hline
      0 & 1 & 2 &  0 & 1 &2 & 0 & 1 & 2   \\
      1 & 2 & 0 &  1 & 2 &0 & 1 & 2 & 0   \\
      2 & 0 & 1 &  2 & 0 &1 & 2 & 0 & 1 \\ \hline
      0 & 1 & 2 &  0 & 1 &2 & 0 & 1 & 2   \\
      1 & 2 & 0 &  1 & 2 &0 & 1 & 2 & 0   \\
      2 & 0 & 1 &  2 & 0 &1 & 2 & 0 & 1 \\ \hline
      0 & 1 & 2 &  0 & 1 &2 & 0 & 1 & 2   \\
      1 & 2 & 0 &  1 & 2 &0 & 1 & 2 & 0   \\
      2 & 0 & 1 &  2 & 0 &1 & 2 & 0 & 1 \\ \hline
\end{array}
$$
Then we assign symbols to the cosets of $g=\langle 1010, 0112\rangle$ in $\Z _3^4$, while preserving the radix symbol assignment above. For example, we obtain
$$
M_Z=\begin{array}{| c  c  c | c c c | c c c|}
\hline
      00 & 10 & 20 &  02 & 12 &22 & 01 & 11 & 21   \\
      11 & 21 & 01 &  10 & 20 &00 & 12 & 22 & 02   \\
      22 & 02 & 12 &  21 & 01 &11 & 20 & 00 & 10 \\ \hline
      01 & 11 & 21 &  00 & 10 &20 & 02 & 12 & 22   \\
      12 & 22 & 02 &  11 & 21 &01 & 10 & 20 & 00   \\
      20 & 00 & 10 &  22 & 02 &12 & 21 & 01 & 11 \\ \hline
      02 & 12 & 22 &  01 & 11 &21 & 00 & 10 & 20   \\
      10 & 20 & 00 &  12 & 22 &02 & 11 & 21 & 01   \\
      21 & 01 & 11 &  20 & 00 &10 & 22 & 02 & 12 \\ \hline
\end{array}\\
=
\begin{array}{| c  c  c | c c c | c c c|}
\hline
      0 & 3 & 6 &  2 & 5 &8 & 1 & 4 & 7   \\
      4 & 7 & 1 &  3 & 6 &0 & 5 & 8 & 2   \\
      8 & 2 & 5 &  7 & 1 &4 & 6 & 0 & 3 \\ \hline
      1 & 4 & 7 &  0 & 3 &6 & 2 & 5 & 8   \\
      5 & 8 & 2 &  4 & 7 &1 & 3 & 6 & 0   \\
      6 & 0 & 3 &  8 & 2 &5 & 7 & 1 & 4 \\ \hline
      2 & 5 & 8 &  1 & 4 &7 & 0 & 3 & 6   \\
      3 & 6 & 0 &  5 & 8 &2 & 4 & 7 & 1   \\
      7 & 1 & 4 &  6 & 0 &3 & 8 & 2 & 5 \\ \hline
\end{array}.
    $$
In general, such an assignment of symbols for $M_Z$ is always possible because each of the $q$ cosets of $V$ is partitioned by $q$ cosets of $g$.

We present two results about sudoku flags that will be used in later sections.

\begin{prop}\label{p:flagform}
A flag $Z=(g,V)$ is a sudoku flag and subsquares of $R(M_Z)$ are latin squares if and only if $Z$ can be written
$$
Z=\left[ \begin{array}{ccc} 1 & 0 & 0 \\ 0 & 1 & 1 \\ a & b & 0 \\ c & d & \beta \end{array} \right],
    $$
where $b$ and $\beta$ are nonzero and $\Gamma =\begin{pmatrix} a & b\\c &d \end{pmatrix}$ is nonsingular.
\end{prop}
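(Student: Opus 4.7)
The plan is to handle both directions of the equivalence in parallel, using the fact that the sudoku-flag hypothesis already puts strong constraints on $g$. By Proposition \ref{p:sudmatrep}, $Z=(g,V)$ being a sudoku flag is equivalent to $g$ admitting a basis $(1,0,a,c)$, $(0,1,b,d)$ with $b\neq 0$ and $ad-bc\neq 0$, pinning down the first two columns of the matrix presenting $Z$ (up to basis change of $g$). The proposition thus reduces to a normal-form claim for any three-dimensional $V\supset g$: such a $V$ admits a third basis vector of shape $(0,1,0,\beta)$ with $\beta\neq 0$ if and only if the radix subsquares of $M_Z$ are latin squares.

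The key step, which I will record as a small preliminary observation, is to translate the latin-subsquare condition into linear algebra. A subsquare at position $(x_1,x_3)$ is the affine 2-plane $(x_1,0,x_3,0)+\pt{0100,0001}$, and the radix symbol at a point is its image under the quotient map $\phi:\F^4\to\F^4/V\cong\F$. Restricted to the subsquare, $\phi$ becomes the affine function $(x_2,x_4)\mapsto \phi(x_1,0,x_3,0) + x_2\phi(0,1,0,0) + x_4\phi(0,0,0,1)$, which gives a latin square on $\F^2$ exactly when both coefficients are nonzero, i.e.\ when neither $(0,1,0,0)$ nor $(0,0,0,1)$ lies in $V$. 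This condition is independent of $(x_1,x_3)$, so either every radix subsquare is a latin square or none is.

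The rest is straightforward. For the converse, assuming $Z$ in the claimed form, a putative relation $\alpha_1(1,0,a,c)+\alpha_2(0,1,b,d)+\alpha_3(0,1,0,\beta)=(0,1,0,0)$ or $(0,0,0,1)$ is killed coordinate by coordinate: the first slot forces $\alpha_1=0$, the third slot (using $b\neq 0$) forces $\alpha_2=0$, and the fourth slot (using $\beta\neq 0$) forces $\alpha_3=0$, contradicting the nonzero right-hand side, so the preliminary observation applies. For the forward direction I would pick $v_3\in V\setminus g$ and subtract appropriate multiples of the $g$-basis to bring $v_3$ to the form $(0,0,r,s)$; the exclusion $(0,0,0,1)\notin V$ then forces $r\neq 0$, so rescaling to $(0,0,-b,-bs/r)$ and adding $(0,1,b,d)$ produces a third basis vector of shape $(0,1,0,\beta)$ with $\beta=d-bs/r$, and the exclusion $(0,1,0,0)\notin V$ translates via the same coordinate check to $\beta\neq 0$. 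The main obstacle is really just the preliminary observation: the temptation is to verify the latin-square property subsquare-by-subsquare, but the elegant path is to view it as a purely linear-algebraic exclusion of two coordinate vectors from the codimension-one subspace $V$, after which both directions collapse to brief calculations.
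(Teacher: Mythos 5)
Your proof is correct and follows essentially the same route as the paper's: reduce the $g$-part to Proposition~\ref{p:sudmatrep}, bring the third basis vector of $V$ into a normal form by column operations, and characterize the latin-radix-subsquare condition as the exclusion of the vectors $(0,1,0,0)$ and $(0,0,0,1)$ from the codimension-one subspace $V$. The paper expresses that exclusion through difference vectors of repeated radix symbols within a subsquare rather than through the quotient map $\F^4\to\F^4/V$, but the content is identical.
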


\begin{proof}
Applying Proposition \ref{p:sudmatrep} and column operations,  $Z=(g,V)$ is a sudoku flag if and only if we can write
\begin{equation}\label{e:z}
Z=\left[ \begin{array}{ccc} 1 & 0 & 0 \\ 0 & 1 & \alpha \\ a & b & 0 \\ c & d & \beta \end{array} \right],
    \end{equation}
where $\Gamma$ is nonsingular and $b\ne 0$. To finish it suffices to show that subsquares of $R(M_Z)$ are not latin if and only if  at least one of $\alpha$ and $\beta$ are nonzero.
Suppose there are two distinct locations $m_1$ and $m_2$ in $R(M_Z)$ housing the same symbol and lying in the same row and subsquare. This is equivalent to the existence of $w\in \F^4$ and $v_1,v_2\in V$ with $m_1=w+v_1$, $m_2=w+v_2$, and $m_1-m_2 = (0,0,0,\mu)\in V$, where $\mu \ne 0$. The only way to obtain a vector of this form from the vectors in (\ref{e:z}) is as a multiple of $(0,\alpha ,0,\beta)$. We conclude that $\alpha =0$. Similarly, we the existence of two distinct locations $m_1$ and $m_2$ in $R(M_Z)$ housing the same symbol and lying in the same column and subsquare is equivalent to $\beta =0$.
\end{proof}

Proposition \ref{p:flagform} indicates that a sudoku flag giving rise to a sudoku solution with latin radix subsquares is completely determined by the datum $(\Gamma ,\beta)$, where
$\Gamma, \beta$ are as in the proposition.

\begin{lem}\label{l:coset}
Suppose $V_1,V_2$ are three-dimensional subspaces of $\F ^4$ with $V_1+V_2=\F ^4$. Then $V_1\cap V_2$ is two-dimensional and the intersection of a coset of $V_1$ with a coset of $V_2$ is a coset of $V_1\cap V_2$.
\end{lem}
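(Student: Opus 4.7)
The plan is to treat the two assertions separately, handling the dimension count first and then the coset intersection. Both follow from standard linear algebra, so there is no substantial obstacle; the main care needed is to verify that the intersection of the two cosets is nonempty before characterizing its structure.

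For the first assertion, I would invoke the dimension formula
\[
\dim(V_1+V_2)=\dim V_1+\dim V_2-\dim(V_1\cap V_2).
\]
Since $V_1+V_2=\F^4$ and $\dim V_1=\dim V_2=3$, this immediately yields $\dim(V_1\cap V_2)=3+3-4=2$.

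For the second assertion, let $w_1+V_1$ and $w_2+V_2$ be arbitrary cosets. The first step is to show the intersection is nonempty. Because $V_1+V_2=\F^4$, the vector $w_2-w_1$ can be written as $u_1+u_2$ with $u_i\in V_i$. Then $x:=w_1+u_1=w_2-u_2$ lies in both $w_1+V_1$ and $w_2+V_2$, so the intersection contains $x$.

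Having found such an $x$, I would then show $(w_1+V_1)\cap(w_2+V_2)=x+(V_1\cap V_2)$. For containment ``$\supseteq$'', any $y\in x+(V_1\cap V_2)$ differs from $x$ by an element of $V_1$ and by an element of $V_2$, so $y\in(x+V_1)\cap(x+V_2)=(w_1+V_1)\cap(w_2+V_2)$. For the reverse inclusion, any $y$ in the intersection satisfies $y-x\in V_1$ (both lie in $w_1+V_1$) and $y-x\in V_2$ (both lie in $w_2+V_2$), hence $y-x\in V_1\cap V_2$ and $y\in x+(V_1\cap V_2)$. This completes the proof.
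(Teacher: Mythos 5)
Your proof is correct and follows essentially the same route as the paper's: the dimension formula gives $\dim(V_1\cap V_2)=2$, and the coset intersection is shown to be a translate of $V_1\cap V_2$ by exhibiting a common point and comparing differences. Your explicit verification that the intersection is nonempty (via $w_2-w_1=u_1+u_2$) is a slightly cleaner packaging of a step the paper handles by normalizing the coset representatives, but the argument is the same.
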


\begin{proof}
Let $u_1+V_1$ and $u_2+V_2$ be cosets of $V_1$ and $V_2$. Since $V_1+V_2=\F ^4$ we may assume $u_1\in V_2$ and $u_2\in V_1$. Put $m=u_1+v_1=u_2+v_2\in (u_1+V_1)\cap (u_2+V_2)$, where $v_j\in V_j$ for $j=1,2$.

We show that
$(u_1+V_1)\cap (u_2+V_2)= (v_1+v_2)+(V_1\cap V_2)$. Let $n=u_1+w_1=u_2+w_2\in (u_1+V_1)\cap (u_2+V_2)$, where $w_j\in V_j$ for $j=1,2$. Note $n-m$ and $u_2-v_1$ lie in $V_1\cap V_2$, which implies
$$
n= m+(n-m)= (v_1+v_2)+[(u_2-v_1)+(n-m)]\in (v_1+v_2)+(V_1\cap V_2).
   $$
Now suppose $n=(v_1+v_2)+w\in (v_1+v_2)+(V_1\cap V_2)$ with $w\in V_1\cap V_2$. Note $v_2-u_1+w$ and $v_1-u_2+w$ lie in $V_1\cap V_2$, therefore
$$
n= u_1+ (v_1+(v_2-u_1+w))\in u_1+V_1 \text{ and } n= u_2+ (v_1+(v_2-u_2+w))\in u_2+V_2
   $$

Finally, the dimension formula says $V_1\cap V_2$ is two dimensional.
\end{proof}

\begin{prop}\label{p:linearcomposite}
Suppose $M_1$ and $M_2$ are orthogonal linear sudoku solutions generated by sudoku flags $Z_1=(g_1,V_1)$ and $Z_2=(g_2,V_2)$, respectively. If the composite solution
$N_{12}$ (see Section \ref{s:so}) is a sudoku solution, then it is a linear sudoku solution generated (up to labels) by the two dimensional subspace $g_{12}=V_1\cap V_2$.
\end{prop}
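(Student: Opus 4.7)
The plan is to identify the locations sharing each composite symbol in $N_{12}$ with a coset of $g_{12}=V_1\cap V_2$ by invoking Lemma \ref{l:coset}, and then to use a cardinality count to confirm that these cosets are exactly the fibers of the symbol map, which is precisely what it means for $N_{12}$ to be the linear sudoku solution generated by $g_{12}$.

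First I would establish that $V_1\ne V_2$. Since the radix symbols of $M_j$ lie in cosets of $V_j$, the value $R(M_j)(m)$ depends only on which coset of $V_j$ contains the location $m$. If $V_1=V_2$, then the composite symbol of $N_{12}$ would be constant on each coset of this common three-dimensional subspace, so $N_{12}$ would use at most $q$ distinct symbols. But $N_{12}$ is assumed to be a sudoku solution of order $q^2$, hence uses $q^2$ distinct symbols---a contradiction. Therefore $V_1\ne V_2$, and since $V_1,V_2$ are three-dimensional in $\F^4$, dimension counting immediately gives $V_1+V_2=\F^4$ and $\dim(V_1\cap V_2)=2$.

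With $V_1+V_2=\F^4$ in hand I apply Lemma \ref{l:coset}. For any pair of cosets $u_1+V_1$ and $u_2+V_2$, the set of locations $m$ at which $R(M_1)(m)$ equals the symbol labeling $u_1+V_1$ and $R(M_2)(m)$ equals the symbol labeling $u_2+V_2$ is exactly $(u_1+V_1)\cap(u_2+V_2)$, which by the lemma is (when nonempty) a coset of $g_{12}$. Thus every fiber of the composite symbol map for $N_{12}$ is contained in some coset of $g_{12}$.

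Finally a count closes things out: each coset of the two-dimensional subspace $g_{12}$ has exactly $q^2$ elements, and since $N_{12}$ is a sudoku solution of order $q^2$ its $q^2$ distinct symbols partition its $q^4$ locations into $q^2$ blocks of size $q^2$. Combining these two facts, the locations carrying a given composite symbol form exactly one coset of $g_{12}$, so $N_{12}$ is the linear sudoku solution generated (up to labels) by $g_{12}$. The only mild obstacle is ruling out $V_1=V_2$; after that the argument reduces to Lemma \ref{l:coset} and a size comparison.
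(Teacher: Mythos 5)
Your proof is correct and follows essentially the same route as the paper: identify the fiber of each composite symbol with the intersection of a coset of $V_1$ and a coset of $V_2$, and apply Lemma \ref{l:coset}. The only differences are cosmetic --- you deduce $V_1+V_2=\F^4$ from the hypothesis that $N_{12}$ is a sudoku solution, whereas the paper deduces it from the orthogonality of $M_1$ and $M_2$ via Proposition \ref{p:orthogcond}, and your closing cardinality count is harmless but redundant, since the lemma already gives equality of each fiber with a coset of $g_{12}$.
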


\begin{proof}
Let $b$ be a symbol in $N_{12}$ and let $b_1b_2$ be the base-$q$ representation of $b$. Locations in $N_{12}$ housing symbols with radix digit $b_1$ are the locations in $R(M_1)$ housing the symbol $b_1$. This set of locations has the form $u_1+V_1$ for some $u_1\in \F^4$. Likewise, locations in $N_{12}$ housing symbols with units digit $b_2$ are the locations in $R(M_2)$ housing the symbol $b_2$. This set of locations has the form $u_2+V_2$ for some
$u_2\in \F ^4$. Therefore the set of locations in $N_{12}$ housing the symbol $b$ has the form $(u_1+V_1)\cap (u_2+V_2)$. Further,  $V_1+V_2=\F ^4$ because $M_1$ and $M_2$ are orthogonal (due to Proposition \ref{p:orthogcond}). Applying  Lemma \ref{l:coset}, we see that the set of locations in $N_{12}$ housing the symbol $b$ is a coset of $g_{12}$.
\end{proof}

A computation of the intersection of the two three-dimensional subspaces of $\F ^4$ gives the following result.

\begin{prop} \label{p:gij}
Suppose sudoku flags $Z_1=(g_1,V_1)$ and $Z_2=(g_2,V_2)$, giving rise to orthogonal sudoku solutions with latin radix subsquares, possess data $(\Gamma _1, \beta _1)$ and $(\Gamma _2,\beta _2)$, respectively, with $\Gamma _i=\begin{pmatrix} a_i & b_i\\ c_i & d_i \end{pmatrix}$ for
$i\in \{1,2\}$. Further, assume $\beta _1 - \beta _2$ and $b_1(d_2-\beta _2)-b_2(d_1-\beta _1)$ are nonzero. Then
$$
g_{12}=V_1\cap V_2=\left[\begin{array}{cc} I \\ \Gamma _{12} \end{array}\right],
   $$
where $\Gamma _{12}=\begin{pmatrix} a_{12} & b_{12} \\ c_{12} & d_{12} \end{pmatrix}$, and
\begin{align*}
a _{12}&=\frac{b_1b_2(c_1-c_2)+a_2b_1(d_2-\beta _2)-a_1b_2(d_1-\beta _1)}{b_1(d_2-\beta _2)-b_2(d_1-\beta _1)},\\
b _{12}&=\frac{b_1b_2(\beta _1-\beta _2)}{b_1(d_2-\beta _2)-b_2(d_1-\beta _1)},\\
c _{12}&=\frac{b_1c_1(d_2-\beta _2)-b_2c_2(d_1-\beta _1)+(a_2-a_1)(d_1-\beta _1)(d_2-\beta _2)}{b_1(d_2-\beta _2)-b_2(d_1-\beta _1)}, \text{ and }\\
d _{12}&=\frac{\beta _1 b_1 (d_2-\beta _2)-\beta _2b_2(d_1-\beta _1)}{b_1(d_2-\beta _2)-b_2(d_1-\beta _1)}.\\
\end{align*}
\end{prop}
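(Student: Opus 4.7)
The plan is to compute $V_1 \cap V_2$ directly using the explicit parametrizations from Proposition \ref{p:flagform}. Since $V_2$ is three-dimensional in $\F^4$, it is the kernel of a (unique up to scalar) nonzero linear functional $\varphi_2 \in (\F^4)^*$. I would find $\varphi_2$ by solving $\varphi_2(v) = 0$ for $v$ ranging over the three columns of the flag matrix for $Z_2$; a short calculation gives $\varphi_2 = \bigl(a_2(d_2-\beta_2) - b_2 c_2,\ -b_2\beta_2,\ \beta_2 - d_2,\ b_2\bigr)$.

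Next I would write a general $w \in V_1$ as $w = x_1 v_{1,1} + x_2 v_{1,2} + x_3 v_{1,3}$, where $v_{1,1}, v_{1,2}, v_{1,3}$ are the columns of the flag matrix for $Z_1$, and impose $\varphi_2(w) = 0$. This is a single linear equation $A_1 x_1 + A_2 x_2 + A_3 x_3 = 0$ whose coefficients are directly computable from the flag data; in particular, $A_3 = b_2(\beta_1 - \beta_2)$, which is nonzero by hypothesis. Hence $V_1 \not\subseteq V_2$, so $V_1 \ne V_2$, and by the dimension formula $V_1 \cap V_2$ is two-dimensional (this is precisely the fact used at the end of Lemma \ref{l:coset}).

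To present $g_{12} = V_1 \cap V_2$ in the form $\left[\begin{array}{c} I \\ \Gamma_{12}\end{array}\right]$, I would extract from the intersection the two vectors of the forms $(1, 0, \ast, \ast)^T$ and $(0, 1, \ast, \ast)^T$. In each case, the first two coordinates pin down $x_1$ and $x_2 + x_3$, reducing the functional equation to a single equation in $x_2$ with coefficient $A_2 - A_3$. A direct expansion shows $A_2 - A_3 = -D$, where $D = b_1(d_2-\beta_2) - b_2(d_1-\beta_1)$ is the second nonzero quantity assumed in the hypothesis; so both equations have unique solutions. Substituting the resulting $x_1, x_2, x_3$ back into $x_1 v_{1,1} + x_2 v_{1,2} + x_3 v_{1,3}$ and reading off the third and fourth coordinates produces $a_{12}, c_{12}$ from the first vector and $b_{12}, d_{12}$ from the second.

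The remaining step is to match these coordinate expressions against the formulas in the statement, which is a routine but bookkeeping-heavy algebraic simplification. I expect the main obstacle to be precisely this bookkeeping: in particular, the appearance of $(d_1 - \beta_1)$ in the denominator $D$, rather than the $(d_1 - \beta_2)$ one might initially predict from $\varphi_2$ (which involves only $\beta_2$), emerges from combining the $v_{1,2}$ and $v_{1,3}$ contributions via $x_3 = -x_2$ or $x_3 = 1 - x_2$, which effectively replaces them by the single vector $(0,0,b_1, d_1 - \beta_1)^T$.
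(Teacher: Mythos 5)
Your proposal is correct and is essentially the computation the paper itself merely asserts ("a computation of the intersection of the two three-dimensional subspaces of $\F^4$ gives the result") without carrying out: you realize $V_2$ as the kernel of the explicit functional $\varphi_2$, restrict it to the parametrization of $V_1$ from Proposition \ref{p:flagform}, and solve for the two basis vectors $(1,0,\ast,\ast)^T$ and $(0,1,\ast,\ast)^T$ of the intersection. I checked your key quantities --- $\varphi_2$ does annihilate the three columns of $Z_2$, $A_3=b_2(\beta_1-\beta_2)\ne 0$ gives $V_1+V_2=\F^4$, and $A_2-A_3=-\bigl(b_1(d_2-\beta_2)-b_2(d_1-\beta_1)\bigr)$ --- and substituting the resulting $x_2$ values does reproduce all four stated formulas for $a_{12},b_{12},c_{12},d_{12}$.
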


\section{Constructing ordered orthogonal arrays}\label{s:co}
In this section we recast Proposition \ref{p:conditions} in terms of sudoku flags. Then we use special collections of sudoku flags to construct families of strongly orthogonal sudoku solutions. This leads to several constructive existence theorems for ordered orthogonal arrays.

Throughout we use the notation of Section 4, and in addition we set $V_R=\langle 0100,0010,0001\rangle\subset \F ^4$ and $V_C=\langle 1000,0100,0001\rangle\subset \F ^4$. Note that $V_R$ and $V_C$ represent locations in the top large row and left large column, respectively, of sudoku solution of order $q^2$.

\begin{prop}\label{p:conditions2}
Let $s\in \Z ^+$ with $s\geq 3$, let $q$ be a prime power, and let $i,j,k,l\in \{1,\dots ,s-2\}$ be distinct. A collection $\{M_1,\dots ,M_{s-2}\}$ of mutually orthogonal sudoku solutions of order $q^2$, with $M_i$ arising from a sudoku flag $Z_i=(g_i,V_i)$ for each $i\in \{1,\dots ,s-2\}$, is strongly orthogonal if and only if the following conditions are satisfied for each choice of $i,j,k,l$:
\begin{itemize}
\item[(i)] Whenever $s\geq 3$, each $Z _i$ is determined by datum $(\Gamma _i ,\beta _i)$, with $\Gamma _i=\begin{pmatrix} a_i & b_i \\ c_i & d_i \end{pmatrix}$, as in Proposition \ref{p:flagform}.
\item[(ii)] Whenever $s\geq 4$,
     \begin{itemize}
     \item[(a)] $\Gamma _ {ij}$ is nonsingular and $b_{ij}$ is nonzero.
     \item[(b)] The matrix $\begin{pmatrix} 1 & 1 & 1 \\ b_i & 0 & b_j \\ d_i & \beta _i & d_j\end{pmatrix}$ is nonsingular.
     \item[(c)] The matrix $\begin{pmatrix} b_i\delta _i & 0 & b_j\delta _j \\ a_i\delta _i & \beta _i ^{-1} & a_j\delta _j \\ 1 & 1 & 1  \end{pmatrix}$ is nonsingular, where
        $\delta _k=[\det \Gamma _k]^{-1}$.
     \end{itemize}
\item[(iii)] Whenever $s\geq 5$,
     \begin{itemize}
     \item[(a)] $g_{ij}\cap (V_k\cap V_R)$ is trivial.
     \item[(b)] $g_{ij}\cap (V_k\cap V_C)$ is trivial.
     \item[(c)] $\Gamma _{ij}-\Gamma _{k}$ is nonsingular.
     \end{itemize}
\item[(iv)] Whenever $s\geq 6$, $\Gamma _{ij}-\Gamma_{kl}$ is nonsingular.
\end{itemize}
\end{prop}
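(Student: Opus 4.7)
The plan is to prove this proposition by matching each of the four conditions in Proposition \ref{p:conditions} with its flag-theoretic counterpart, using the machinery developed in Section \ref{s:linear}. Two recurring themes drive the translation: (a) orthogonality of two linear sudoku solutions is a determinant condition (Corollary \ref{c:orthogcond}), and (b) orthogonality of corresponding large rows or large columns of two linear sudoku solutions is equivalent to a trivial-intersection condition for subspaces inside $V_R$ or $V_C$.

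First, condition (i) will follow immediately from Proposition \ref{p:flagform}, which characterizes the sudoku flags giving latin radix subsquares by the stated data $(\Gamma _i ,\beta _i)$. For (ii)(a), I would use Proposition \ref{p:linearcomposite} to identify $N_{ij}$ (when a sudoku solution) as the linear sudoku generated by $g_{ij}=V_i\cap V_j$; Proposition \ref{p:sudmatrep} then says that $g_{ij}$ generates a sudoku solution precisely when it has the form $\left[\begin{array}{c} I \\ \Gamma _{ij}\end{array}\right]$ with $\Gamma _{ij}$ nonsingular and $b_{ij}\ne 0$, while Proposition \ref{p:gij} supplies the explicit data. For (ii)(b), I would observe that within a fixed large row (a coset of $V_R$), radix symbols of $M_i$ lie in cosets of $V_i\cap V_R$ while symbols of $M_j$ lie in cosets of $g_j\cap V_R$; orthogonality of these two restricted arrays is exactly triviality of $V_i\cap g_j\cap V_R$. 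A short calculation with bases obtained from the flag data will reduce this triviality to the nonsingularity of the displayed $3\times 3$ matrix. Condition (ii)(c) is handled identically, with $V_R$ replaced by $V_C$.

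For (iii), I would repeat the scheme with $N_{ij}$ (now a linear sudoku generated by $g_{ij}$, by Proposition \ref{p:linearcomposite}) taking the role previously played by $M_i$. Orthogonality of corresponding large rows (resp.\ large columns) of $N_{ij}$ and $R(M_k)$ is equivalent to triviality of $g_{ij}\cap V_k\cap V_R$ (resp.\ $g_{ij}\cap V_k\cap V_C$), giving (iii)(a) and (iii)(b) directly; Corollary \ref{c:orthogcond} applied to $N_{ij}$ and $M_k$ yields (iii)(c). Condition (iv) is then a final application of Corollary \ref{c:orthogcond} to the pair of linear sudoku solutions $N_{ij}$ and $N_{kl}$, generated respectively by $g_{ij}$ and $g_{kl}$.

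The main obstacle will be the explicit linear-algebra reductions in (ii)(b) and (ii)(c): constructing bases for $V_i\cap V_R$ and $V_i\cap V_C$ in terms of $(\Gamma _i,\beta _i)$ and verifying that triviality of the three-fold intersection with $g_j$ collapses exactly to the determinant of the stated $3\times 3$ matrix. A subsidiary care point is that the denominator-nonvanishing hypotheses in Proposition \ref{p:gij} must be shown to be automatic consequences of condition (ii)(a), so that the explicit formulas for $\Gamma _{ij}$ may be invoked whenever required in the remainder of the argument.
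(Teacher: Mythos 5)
Your proposal is correct and follows essentially the same route as the paper: condition (i) via Proposition \ref{p:flagform}; (ii)(a) via Propositions \ref{p:linearcomposite}, \ref{p:gij}, and \ref{p:sudmatrep}; (ii)(b)--(c) and (iii)(a)--(b) by reducing large-row/large-column orthogonality to triviality of the relevant triple intersections inside $V_R$ or $V_C$ (using Lemma \ref{l:coset}) and then an explicit basis computation; and (iii)(c), (iv) via Proposition \ref{p:gij} with Corollary \ref{c:orthogcond}. Your closing care points (the explicit $3\times 3$ reductions and the nonvanishing of the denominators in Proposition \ref{p:gij}) are exactly the computational steps the paper carries out or implicitly assumes.
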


\begin{proof}
We obtain item (i) from the corresponding statement in Proposition \ref{p:conditions} together with Proposition \ref{p:flagform}. Item (ii)(a) follows from the corresponding statement in Proposition \ref{p:conditions} together with Propositions \ref{p:gij} and \ref{p:sudmatrep}. Items (iii)(c) and (iv) are obtained from the corresponding statements in Proposition \ref{p:conditions} together with Proposition \ref{p:gij} and Corollary \ref{c:orthogcond}.

For item (iii)(a), observe that large rows of $N_{ij}$ and $R(M_k)$ are orthogonal if and only if for any $v,w,z\in \F ^4$, the cosets $v+g_{ij}$, $w+V_k$, and $z+V_R$ meet in exactly one location. By applying Lemma \ref{l:coset}, this is equivalent to saying that for any $v,\tilde w\in \F^4$ we have $|(v+g_{ij})\cap (\tilde w +(V_k\cap V_R))|=1$. In turn, by Lemma 3.1 of \cite{jL10}, this is equivalent to $g_{ij}\cap (V_k\cap V_R)=\{0\}$. Item (iii)(b) follows similarly.

Regarding item (ii)(b), the argument for item (iii)(a) above applies to show that large rows of $R(M_i)$ are $M_j$ are orthogonal exactly when $g_j\cap (V_i\cap V_R)=\{0\}$. Observe that
$$
V_i\cap V_R=\left[ \begin{array}{cc} 0 & 0 \\ 1 & 1 \\ b_i & 0 \\ d_i & \beta _i \end{array}\right] \text{ and }
g_j\cap V_R=\left[ \begin{array}{cc} 0  \\ 1  \\ b_j  \\ d_j \end{array}\right],
   $$
so that $g_j\cap (V_i\cap V_R)=\{0\}$ means $\begin{pmatrix} 1 & 1 & 1 \\ b_i & 0 & b_j \\ d_i & \beta _i & d_j\end{pmatrix}$ is nonsingular. Item (ii)(c) is verified similarly.
\end{proof}

Now we turn to applications. The main goal is to produce constructive existence theorems for ordered orthogonal arrays.

\subsection{Small values of $s$} Here we construct ordered orthogonal arrays of type $\OOA (4,s,2,q)$ when $s$ is small. Throughout $s,n$ is are positive integers with $s\geq 3$, and $q$ is a prime power.

We say that a collection of $s-2$ mutually orthogonal sudoku solutions of order $n^2$ is {\bf substrongly orthogonal} if it satisfies condition (i) of Proposition \ref{p:conditions} when $s\geq 3$ and also condition (ii) of Proposition \ref{p:conditions} when $s\geq 4$. Via the identification given in Theorem \ref{t:equivtosudoku}, a substrongly orthogonal set of sudoku solutions generates a sudoku array which, in general, has some, but not all of the non-repetition properties of a true ordered orthogonal array. When $s=3,4$ the notions of strong orthogonality and substrong orthogonality coincide.

\begin{thm}\label{t:substrong}
Let $q$ be a prime power. The maximum (achieved) size of a substrongly orthogonal set of sudoku solutions of order $q^2$ is $q-1$.
\end{thm}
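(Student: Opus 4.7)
The upper bound $s-2\leq q-1$ is immediate from Corollary~\ref{c:crude}: since substrong orthogonality includes conditions~(i) and~(ii)(a) of Proposition~\ref{p:conditions}, the corresponding radix subsquares of the $M_i$ form a family of pairwise mutually orthogonal latin squares of order~$q$, of which at most $q-1$ can exist.

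For achievability, the plan is to exhibit $q-1$ sudoku flags $\{Z_r\}_{r\in\F^*}$, each specified by data $(\Gamma_r,\beta_r)$ as in Proposition~\ref{p:flagform}, and verify that the sudoku solutions $M_{Z_r}$ satisfy conditions~(i) and~(ii) of Proposition~\ref{p:conditions2}. Condition~(i) will be built into the form of $\Gamma_r$. Conditions~(ii)(b) and~(ii)(c) will reduce by direct $3\times 3$ determinant expansions to constraints of the form ``$r_i\ne r_j$'' or ``$\beta_i\ne 0$''. The subtle condition is~(ii)(a): one must use Proposition~\ref{p:gij} to compute the composite data $(\Gamma_{ij},\beta_{ij})$ and then verify both $b_{ij}\ne 0$ and $\det\Gamma_{ij}\ne 0$.

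In odd characteristic I would take
$$\Gamma_r=\begin{pmatrix} r & 1 \\ 0 & r\end{pmatrix},\qquad \beta_r=2r\qquad (r\in\F^*).$$
The formulas of Proposition~\ref{p:gij} then yield $b_{ij}=2$ and
$$\Gamma_{ij}=\begin{pmatrix} r_i+r_j & 2 \\ -r_ir_j & 0\end{pmatrix},\qquad \det\Gamma_{ij}=2r_ir_j,$$
both nonzero on~$\F^*$. In characteristic two the scalar $2$ vanishes, so I would instead adapt the construction to the quadratic extension $\F_{q^2}$: fix an irreducible polynomial $x^2+Tx+N\in\F[x]$ (available because $\F_{q^2}$ is a proper field extension of $\F$), let $A$ be the $2\times 2$ matrix over~$\F$ representing multiplication by a root $\omega$ of this polynomial on $\F_{q^2}$ with respect to the basis $\{1,\omega\}$, and set
$$\Gamma_r=rA,\qquad \beta_r=r^2\qquad (r\in\F^*).$$
Here $\Gamma_r$ is nonsingular because $\omega\notin\F$, and $r\mapsto r^2$ is a bijection of $\F^*$ in characteristic two (Frobenius), so $\beta_r$ is both nonzero and injective. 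Proposition~\ref{p:gij} then gives $b_{ij}=N(r_i+r_j)$ and $\det\Gamma_{ij}=Nr_ir_j$; since $r_i+r_j=0$ is equivalent to $r_i=r_j$ in characteristic two, both quantities are nonzero whenever $r_i\ne r_j$ in $\F^*$.

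The main obstacle is locating the two explicit constructions above. The rational formulas in Proposition~\ref{p:gij} for $(\Gamma_{ij},\beta_{ij})$ are involved enough that a naive one-parameter family of flags typically produces a singular $\Gamma_{ij}$; the two recipes displayed are tailored to the arithmetic of $\F$ in the appropriate characteristic, and the bulk of the verification is the associated determinant computation.
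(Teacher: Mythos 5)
Your proof is correct and takes essentially the same route as the paper: the upper bound comes from the MOLS bound on corresponding radix subsquares, and the lower bound comes from exhibiting an explicit one-parameter family of flag data indexed by $\F^\times$ and verifying conditions (i)--(ii) of Proposition \ref{p:conditions2} via the formulas of Proposition \ref{p:gij}. The only difference is the choice of witness --- the paper uses the single family $\Gamma_i=\begin{pmatrix}(\alpha i)^{-1} & 1\\ 0 & \alpha i\end{pmatrix}$, $\beta_i=i$ (with $\alpha\ne 0,1$) uniformly for all $q>2$ and treats $q=2$ separately, whereas you split by characteristic --- and both verifications go through.
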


\begin{proof}
Let $\F$ denote the field of order $q$. If $q=2$ then let $M$ be a sudoku solution of order $4$ generated by the sudoku flag  with data $(\Gamma ,\beta)$, where
$$
\Gamma = \begin{pmatrix} 1 & 1 \\ 0 & 1 \end{pmatrix} \text{ and } \beta = 1.
   $$
The set $\{M\}$ is substrongly orthogonal (in fact strongly orthogonal) because it meets condition (i) of Proposition \ref{p:conditions2}.

Now suppose $q>2$ and $\alpha\in \F ^\times$ with $\alpha \ne 1$. Let $\{M_i \mid i\in \F ^\times \}$ be a set of sudoku solutions of order $q^2$ generated by sudoku flags with
data $\{(\Gamma _i ,\beta _i)\mid i \in \F ^\times \}$ where
$$
\Gamma _i =  \begin{pmatrix} (\alpha  i)^{-1} & 1 \\ 0 & \alpha i \end{pmatrix} \text{ and } \beta _i = i.
    $$

Since $\Gamma_i -\Gamma _j$ is nonsingular for distinct members $i,j$ of $\F ^\times$, the sudoku solutions $\{M_i\}_{i\in \F^\times}$ are mutually orthogonal according to Corollary \ref{c:orthogcond}, and they satisfy item (i) of Proposition \ref{p:conditions2} by construction.

Using Proposition \ref{p:gij} we see that
$$
\Gamma _{ij}=\begin{pmatrix} 0 & (1-\alpha )^{-1} \\ \alpha ^{-1} (1-\alpha ) & 0 \end{pmatrix},
  $$
which is nonsingular and satisfies $b_{ij}\ne 0$, thereby satisfying item (ii)(a) of Proposition \ref{p:conditions2}. Further, items (ii)(b) and (ii)(c) of Proposition \ref{p:conditions2} are satisfied because the determinants of the matrices mentioned in those items, namely $\alpha (i-j)$ and $\alpha ^{-1} (j^{-1}-i^{-1})$, respectively, are both nonzero. We conclude that $\{M_i \mid i\in \F ^ \times\}$ is substrongly orthogonal (of size $q-1$).

Finally, by appealing to the proof of Proposition \ref{p:conditions}, corresponding order-$q$ radix subsquares of mutually orthogonal sudoku solutions satisfying items (i) and (ii) of Proposition \ref{p:conditions} must be mutually orthogonal latin squares. Such mutually orthogonal famulies of latin squares have size at most $q-1$.

\end{proof}

\begin{cor} \label{c:little}
For each prime power $q\geq 2$ there exists an $\OOA (4,3,2,q)$, and for $q\geq 3$ there exists an $\OOA (4,4,2,q)$.
\end{cor}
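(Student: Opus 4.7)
The plan is to deduce Corollary \ref{c:little} as an essentially immediate consequence of Theorem \ref{t:substrong}, exploiting the remark preceding that theorem: for $s\in\{3,4\}$ the notions of substrong and strong orthogonality coincide. Combined with Theorem \ref{t:equivtosudoku} and the defining role of Proposition \ref{p:conditions} (a strongly orthogonal family of $s-2$ sudoku solutions of order $q^2$ is, by design, exactly what it takes for the associated sudoku array to be an $\OOA(4,s,2,q)$), the corollary reduces to producing substrongly orthogonal families of sizes $1$ and $2$ respectively.

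For the first assertion, let $q\geq 2$ be a prime power. Theorem \ref{t:substrong} furnishes a substrongly orthogonal set of size $q-1\geq 1$; I would pick any single element. With $s=3$, condition (ii) of Proposition \ref{p:conditions} is vacuous, leaving only condition (i), which that single element inherits; substrong orthogonality therefore equals strong orthogonality here, and Theorem \ref{t:equivtosudoku} together with Proposition \ref{p:conditions} converts the singleton into an $\OOA(4,3,2,q)$. For the second assertion, take $q\geq 3$, so that Theorem \ref{t:substrong} delivers a substrongly orthogonal family of size $q-1\geq 2$, and select any two members. Substrong orthogonality is inherited by subfamilies, since the conditions in Proposition \ref{p:conditions} only quantify over individual solutions and unordered pairs of solutions; with $s=4$ this pair is then strongly orthogonal, and the same identification yields an $\OOA(4,4,2,q)$.

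There is no genuine obstacle here. The only step worth flagging is the passage-to-subfamilies observation, and that is immediate from the pointwise nature of conditions (i) and (ii) in Proposition \ref{p:conditions}. All of the real work has already been done in proving Theorem \ref{t:substrong}.
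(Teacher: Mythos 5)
Your proposal is correct and follows essentially the same route as the paper: extract a singleton (for $q\geq 2$) or a pair (for $q\geq 3$) from the size-$(q-1)$ substrongly orthogonal family of Theorem \ref{t:substrong}, note that substrong and strong orthogonality coincide for $s=3,4$ and that the relevant conditions pass to subfamilies, and then apply the identification of Theorem \ref{t:equivtosudoku}. No gaps.
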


\begin{proof}
By Proposition \ref{p:conditions2}, any single element of a substrongly orthogonal set of sudoku solutions of order $q^2$ forms a strongly orthogonal set, as does any pair of elements. By Theorem \ref{t:substrong},  such single sudoku solutions exist whenever $q\geq 2$, and pairs exist when $q\geq 3$. Therefore, sudoku arrays obtained from these strongly orthogonal sets via the identification in Theorem \ref{t:equivtosudoku} form an $\OOA (4,3,2,q)$ and an $\OOA (4,4,2,q)$, respectively.
\end{proof}

To illustrate Theorem \ref{t:substrong} and Corollary \ref{c:little}, we put $\F =\Z _3$ and $\alpha =2$ in the proof of Theorem \ref{t:substrong}. We obtain data $(\Gamma _i , \beta _i)$ for $i\in\{1,2\}$ where $\beta _i=i$ and
$$
\Gamma _1 = \begin{pmatrix} 2 &  1 \\ 0 & 2 \end{pmatrix} \quad \text{and} \quad
\Gamma _2 = \begin{pmatrix} 1 &  1 \\ 0 & 1 \end{pmatrix}.
   $$
These data correspond to sudoku flags
$$
Z_1 =\left[ \begin{array}{ccc} 1 & 0 & 0 \\ 0 & 1 & 1 \\ 2 & 1 & 0 \\ 0 & 2 & 1 \end{array}\right] \quad \text{and}  \quad
Z_2=\left[ \begin{array}{ccc} 1 & 0 & 0 \\ 0 & 1 & 1 \\ 1 & 1 & 0 \\ 0 & 1 & 2 \end{array}\right],
    $$
which in turn give rise to the following strongly orthogonal pair of sudoku solutions, with symbols expressed in base $3$:

{\tiny
$$
M_1=\begin{array}{| c  c  c | c c c | c c c|}
\hline
      00 & 10 & 20 &  22 & 02 &12 & 11 & 21 & 01   \\
      21 & 01 & 11 &  10 & 20 &00 & 02 & 12 & 22   \\
      12 & 22 & 02 &  01 & 11 &21 & 20 & 00 & 10 \\ \hline
      22 & 02 & 12 &  11 & 21 &01 & 00 & 10 & 20   \\
      10 & 20 & 00 &  02 & 12 &22 & 21 & 01 & 11   \\
      01 & 11 & 21 &  20 & 00 &10 & 12 & 22 & 02 \\ \hline
      11 & 21 & 01 &  00 & 10 &20 & 22 & 02 & 12   \\
      02 & 12 & 22 &  21 & 01 &11 & 10 & 20 & 00   \\
      20 & 00 & 10 &  12 & 22 &02 & 01 & 11 & 21 \\ \hline
\end{array}
\quad \text{ and } \quad
M_2=
\begin{array}{| c  c  c | c c c | c c c|}
\hline
      00 & 10 & 20 &  12 & 22 &02 & 21 & 01 & 11   \\
      11 & 21 & 01 &  20 & 00 &10 & 02 & 12 & 22   \\
      22 & 02 & 12 &  01 & 11 &21 & 10 & 20 & 00 \\ \hline
      21 & 01 & 11 &  00 & 10 &20 & 12 & 22 & 02    \\
      02 & 12 & 22 &  11 & 21 &01 & 20 & 00 & 10   \\
      10 & 20 & 00 &  22 & 02 &12 & 01 & 11 & 21   \\ \hline
      12 & 22 & 02 &  21 & 01 &11 & 00 & 10 & 20    \\
      20 & 00 & 10 &  02 & 12 &22 & 11 & 21 & 01   \\
      01 & 11 & 21 &  10 & 20 &00 & 22 & 02 & 12     \\ \hline
\end{array}.
    $$
   }
Using the identification described in Theorem \ref{t:equivtosudoku}, solutions $M_1$ and $M_2$ give rise to the following $\OOA (4,4,2,3)$ (only partially shown):
{\tiny
$$
    \begin{array}{|ccccccccccccccccccc|}
 \hline
 0& 0 & 0 & 0 & 0 &  0 &  0 &  0 & 0 & 0 & 0 & 0 & 0 & 0 & 0 & 0 & 0 & 0 & \cdots \\
 0& 0 & 0 & 0 & 0 &  0 &  0 &  0 & 0 & 1 & 1 & 1 & 1 & 1 & 1 & 1 & 1 & 1 & \cdots \\
 \hline
 0& 0 & 0 & 1 & 1 &  1 &  2 &  2 & 2 & 0 & 0 & 0 & 1 & 1 & 1 & 2 & 2 & 2 & \cdots \\
 0& 1 & 2 & 0 & 1 &  2 &  0 &  1 & 2 & 0 & 1 & 2 & 0 & 1 & 2 & 0 & 1 & 2 & \cdots \\
 \hline
 0& 1 & 2 & 2 & 0 &  1 &  1 &  2 & 0 & 2 & 0 & 1 & 1 & 2 & 0 & 0 & 1 & 2 & \cdots  \\
 0& 0 & 0 & 2 & 2 &  2 &  1 &  1 & 1 & 1 & 1 & 1 & 0 & 0 & 0 & 2 & 2 & 2 & \cdots \\
 \hline
 0& 1 & 2 & 1 & 2 &  0 &  2 &  0 & 1 & 1 & 2 & 0 & 2 & 0 & 1 & 0 & 1 & 2 & \cdots  \\
 0& 0 & 0 & 2 & 2 &  2 &  1 &  1 & 1 & 1 & 1 & 1 & 0 & 0 & 0 & 2 & 2 & 2 & \cdots \\
 \hline
 \end{array}
   $$
}

\subsection{Larger values of $s$}

As before, let $q$ be a prime power and $s\in \Z$ with $s\geq 3$. For a given value of $q$ we present values of $s$ for which ordered orthogonal arrays of type $\OOA (4,s,2,q)$ must exist and indicate how they can be constructed.

\begin{thm}\label{t:big}
Let $\F$ be the field of order $q$, and let $S\subset \F ^\times$ be such that if $i,j\in S$ are distinct, then $i+j\ne 0$ and $ij\ne -1$. The collection of sudoku flags with data  $\{ (\Gamma _i ,\beta _i)\mid i\in S\}$, where
$$
\Gamma _i=\begin{pmatrix} i & 1 \\ 0 & i^{-1} \end{pmatrix} \quad \text{and}\quad \beta_i =i,
    $$
gives rise to a strongly orthogonal collection of order-$q^2$ sudoku solutions of size $|S|$.
\end{thm}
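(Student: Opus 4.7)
The plan is to verify each applicable condition of Proposition \ref{p:conditions2} for the explicit data $a_i = i$, $b_i = 1$, $c_i = 0$, $d_i = i^{-1}$, $\beta_i = i$ (so that $\det \Gamma_i = 1$ and $\delta_i = 1$). Throughout, $i,j,k,l$ denote distinct members of $S$. Condition (i) is immediate because $b_i = 1$ and $\beta_i = i \in \F^\times$ are both nonzero and $\Gamma_i$ has determinant $1$. For (ii)(a), I first confirm the hypotheses of Proposition \ref{p:gij}: $\beta_i - \beta_j = i - j \ne 0$, and $b_i(d_j - \beta_j) - b_j(d_i - \beta_i) = (i-j)(1+ij)/(ij)$, which is nonzero precisely because $ij \ne -1$. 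Substitution then gives
\[
\Gamma_{ij} = \frac{1}{1+ij}\begin{pmatrix} ij(i+j) & ij \\ -(1-i^2)(1-j^2) & i+j \end{pmatrix}.
\]
The workhorse identity $(i+j)^2 + (1-i^2)(1-j^2) = (1+ij)^2$ yields $\det \Gamma_{ij} = ij \ne 0$, together with $\mathrm{tr}\,\Gamma_{ij} = i+j$, so the characteristic polynomial of $\Gamma_{ij}$ is $(x-i)(x-j)$. Also $b_{ij} = ij/(1+ij) \ne 0$, completing (ii)(a). Conditions (ii)(b) and (ii)(c) reduce to single $3\times 3$ determinant evaluations; direct expansion gives $(j-i)/(ij)$ and $i - j$ respectively, each nonzero by distinctness.

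For (iii)(a), the plan is to note that $V_k \cap V_R$ is spanned by $(0,1,1,k^{-1})$ and $(0,1,0,k)$, while the slice of $g_{ij}$ in the first-coordinate-zero hyperplane is spanned by $(0,1,b_{ij},d_{ij})$. Solving the resulting linear system collapses the triviality of $g_{ij} \cap (V_k \cap V_R)$ to the non-vanishing of $k^2 - k(i+j) + ij = (k-i)(k-j)$, which holds by distinctness. Condition (iii)(b) runs parallel: the element of $g_{ij}$ with third coordinate zero simplifies, via the key identity, to $(1,-(i+j),0,-(1+ij))$, and membership in $V_k \cap V_C = \langle (1,-k,0,-1),(0,1,0,k)\rangle$ again reduces to $(k-i)(k-j) \ne 0$. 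For (iii)(c), direct expansion of $\det(\Gamma_{ij} - \Gamma_k)$, again using the key identity to eliminate the $(1-i^2)(1-j^2)$ term, gives
\[
\det(\Gamma_{ij} - \Gamma_k) = \frac{-(i+j)(k-i)(k-j)}{k(1+ij)},
\]
which is nonzero because $i+j \ne 0$ (the first essential use of that hypothesis) and $k \notin \{i,j\}$.

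The main obstacle is condition (iv). The plan is to expand $\det(\Gamma_{ij} - \Gamma_{kl})$ over the common denominator $(1+ij)^2(1+kl)^2$, apply the key identity to both pairs $(i,j)$ and $(k,l)$ to eliminate every $(1-x^2)(1-y^2)$ term, and then organize the surviving numerator using the three symmetric pieces $X = ij + kl$, $Y = il + jk$, $Z = ik + jl$. The identity $Y + Z = (i+j)(k+l)$ makes the numerator collapse to $(X-Y)(X-Z)$, which factors as $(i-k)(i-l)(j-k)(j-l)$, yielding the clean closed form
\[
\det(\Gamma_{ij} - \Gamma_{kl}) = \frac{(i-k)(i-l)(j-k)(j-l)}{(1+ij)(1+kl)}.
\]
This is nonzero by distinctness of $i,j,k,l$ together with $ij, kl \ne -1$, which verifies (iv). Assembling all cases, Proposition \ref{p:conditions2} yields strong orthogonality of the family. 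The bookkeeping in the (iv) expansion is the crux; everything else is a direct algebraic check in which the hypotheses $i+j \ne 0$ and $ij \ne -1$ insert themselves exactly where expected.
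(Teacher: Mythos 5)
Your proposal is correct and follows essentially the same route as the paper: verify the conditions of Proposition \ref{p:conditions2} one by one via explicit determinant computations, and every formula you obtain ($\Gamma_{ij}$ with $\det\Gamma_{ij}=ij$ and $b_{ij}=ij/(1+ij)$, the values $(j-i)/(ij)$ and $i-j$ for (ii)(b)--(c), $\det(\Gamma_{ij}-\Gamma_k)=-(i+j)(k-i)(k-j)/\bigl(k(1+ij)\bigr)$, $\det A_R$, $\det A_C$, and $\det(\Gamma_{ij}-\Gamma_{kl})$) agrees with the paper's. Your extra structural observations --- that the characteristic polynomial of $\Gamma_{ij}$ is $(x-i)(x-j)$, and the symmetric-function bookkeeping $X=ij+kl$, $Y=il+jk$, $Z=ik+jl$ with $(X-Y)(X-Z)=(i-k)(i-l)(j-k)(j-l)$ --- are pleasant additions the paper does not make, though note that over the denominator $(1+ij)^2(1+kl)^2$ the numerator must collapse to $(1+ij)(1+kl)(X-Y)(X-Z)$ rather than $(X-Y)(X-Z)$ to yield your (correct) closed form. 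The one genuine omission: Proposition \ref{p:conditions2} (and Proposition \ref{p:gij}) takes as a \emph{hypothesis} that the $M_i$ are mutually orthogonal sudoku solutions, and the conclusion ``strongly orthogonal'' includes mutual orthogonality; you never verify it. The paper does so at the outset via Proposition \ref{p:sudmatrep} and Corollary \ref{c:orthogcond}; in your setup $\Gamma_i-\Gamma_j=\mathrm{diag}(i-j,\,i^{-1}-j^{-1})$ has determinant $-(i-j)^2/(ij)\ne 0$, so the patch is one line, but it should be stated.
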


\begin{proof}
We need to verify that this data gives rise to mutually orthogonal sudoku solutions, and that the corresponding sudoku flags satisfy the conditions of Proposition \ref{p:conditions2}. Because $\Gamma _i$ is nonsingular for $i\in S$, $\Gamma _i -\Gamma _j$ is nonsingular for distinct $i,j\in S$, and $b_i$ is nonzero, Proposition \ref{p:sudmatrep} and Corollary \ref{c:orthogcond} indicate that the data give rise to a family of mutually orthogonal sudoku solutions. Further, by Proposition \ref{p:flagform}, condition (i) of Proposition \ref{p:conditions2} is satisfied, and a moment's work shows that conditions (ii)(b) and (ii)(c) are satisfied.

The remaining conditions in Proposition \ref{p:conditions2} require us to observe that for distinct $i,j\in S$,
$$
\Gamma _{ij}=\left(\begin{array}{cc} \frac{ij(i+j)}{1+ij} & \frac{ij}{1+ij} \\ \frac{(1-i^2)(j^2-1)}{1+ij} & \frac{i+j}{1+ij}   \end{array}\right).
    $$
Observe that $\Gamma _{ij}$ is well defined due to our hypotheses, that $b_{ij}$ is nonzero, and that, since $\det \Gamma _{ij}=ij\ne 0$, we know
$\Gamma _{ij}$ is nonsingular. We conclude that condition (ii)(a) of Proposition \ref{p:conditions2} is satisfied. Further, since
$\det (\Gamma _{ij}-\Gamma _k)$ and $\det (\Gamma _{ij}-\Gamma _{kl})$ are
$$
\frac{(i+j)(k-i)(j-k)}{k(1+ij)} \quad \text{and}\quad \frac{(i-k)(j-k)(i-l)(j-l)}{(1+ij)(1+kl)},
    $$
respectively, we apply the hypotheses to conclude that conditions (iii)(c) and (iv) of Proposition \ref{p:conditions2} are satisfied.

Next, observe that $g_{ij}\cap V_R =\langle (0, 1, \frac{ij}{1+ij},\frac{i+j}{1+ij})\rangle$ and $V_k\cap V_R=\langle (0,1,1,k^{-1}),(0,1,0,k)\rangle$. Therefore
$g_{ij}\cap (V_k\cap V_R)$ is trivial if and only if
$$
A_R= \begin{pmatrix} 1 & 1 & 1 \\ \frac{ij}{1+ij} & 1 & 0 \\ \frac{i+j}{1+ij} & k^{-1} & k \end{pmatrix}
    $$
is nonsingular. But $\det A_R=\frac{(i-k)(j-k)}{k(1+ij)}\ne 0$ by hypothesis, hence condition (iii)(a) of Proposition \ref{p:conditions2} holds.

Similarly, $g_{ij}\cap V_C=\langle (\frac{-1}{1+ij},\frac{i+j}{1+ij},0,1) \rangle$ and $V_k\cap V_C =\langle (1,-k,0,-1),(0,1,0,k) \rangle $. Therefore
$g_{ij}\cap (V_k\cap V_C)$ is trivial if and only if
$$
A_C=\begin{pmatrix} \frac{-1}{1+ij} &  1 &  0 \\ \frac{i+j}{1+ij} & -k & 1 \\ 1 & -1 & k    \end{pmatrix}
   $$
is nonsingular. But $\det A_C=\frac{(i-k)(j-k)}{1+ij}\ne 0$ by hypothesis, hence condition (iii)(b) of Proposition \ref{p:conditions2}  holds.
\end{proof}

\begin{cor}\label{c:big}
Let $q$ be a prime power. Ordered orthogonal arrays of type $\OOA (4,s,2,q)$ may be constructed whenever $\displaystyle 3\leq s\leq \frac{q+4}{2}$.
\end{cor}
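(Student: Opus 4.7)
The plan is to apply Theorem~\ref{t:big} directly: it suffices to produce, for each integer $s$ with $3\leq s \leq (q+4)/2$, a subset $S\subset \F^\times$ of cardinality $s-2$ such that $i+j\ne 0$ and $ij\ne -1$ for every pair of distinct $i,j\in S$. Since any subset of such a valid $S$ is itself valid, it is enough to exhibit one such $S$ of the maximum relevant size, namely $\lfloor q/2\rfloor = \lfloor (q+4)/2\rfloor -2$.

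I would recast the combinatorial requirement as an independent-set problem. Let $H$ be the graph with vertex set $\F^\times$ and an edge between $i$ and $j$ precisely when $i+j=0$ or $ij=-1$; a valid $S$ is an independent set in $H$. The maps $\sigma(i)=-i$ and $\tau(i)=-i^{-1}$ are commuting involutions on $\F^\times$, generating a Klein four-subgroup whose orbit through $i$ is contained in $\{i,-i,i^{-1},-i^{-1}\}$ and thus has size $1$, $2$, or $4$. A quick verification shows that within a size-$4$ orbit the four edges of $H$ form a $4$-cycle (the non-edges $\{i,i^{-1}\}$ and $\{-i,-i^{-1}\}$ correspond to the products $1\ne -1$ and sums $i+i^{-1}\ne 0$, using $i^2\ne -1$), so each size-$4$ orbit contributes $2$ to a maximum independent set, and each size-$2$ orbit (which is always an edge) contributes $1$.

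The remaining task is a case analysis that classifies the exceptional orbits and sums the contributions. In odd characteristic, $\{1,-1\}$ is always a size-$2$ orbit, and a second size-$2$ orbit $\{i,-i\}$ with $i^2=-1$ occurs exactly when $q\equiv 1 \pmod 4$; all other orbits have size $4$. In characteristic $2$, $\sigma$ is trivial, so orbits are $\{1\}$ (a fixed isolated vertex) together with $(q-2)/2$ pairs $\{i,i^{-1}\}$ (each an edge). Summing orbit-by-orbit gives a maximum independent set of size $q/2$ when $q$ is even, and $1+1+2\cdot (q-5)/4=(q-1)/2$ or $1+2\cdot(q-3)/4=(q-1)/2$ according as $q\equiv 1$ or $q\equiv 3 \pmod 4$; in every case this equals $\lfloor q/2\rfloor$. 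An explicit $S$ achieving the bound is obtained by picking a representative from each size-$2$ orbit and the non-adjacent pair $\{i,i^{-1}\}$ from each size-$4$ orbit, after which Theorem~\ref{t:big} converts this $S$ into a strongly orthogonal family of $s-2$ sudoku solutions of order $q^2$, and Theorem~\ref{t:equivtosudoku} together with Proposition~\ref{p:conditions} delivers the $\OOA(4,s,2,q)$.

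The main (only real) obstacle is pinning down the orbit structure on $\F^\times$ under $\langle\sigma,\tau\rangle$ and verifying that the induced subgraphs are $4$-cycles on size-$4$ orbits, together with the degenerate cases in characteristic $2$ and when $-1$ is a square; once this orbit arithmetic is handled carefully, the derivation of the bound $\lfloor q/2\rfloor$ and the conclusion $s\leq (q+4)/2$ are immediate from Theorem~\ref{t:big}.
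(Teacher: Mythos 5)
Your proposal is correct, and the reduction is the same as the paper's: both proofs observe that it suffices to exhibit a single set $S\subset\F^\times$ of size $\lfloor q/2\rfloor$ avoiding $i+j=0$ and $ij=-1$, since subsets inherit the property and Theorem \ref{t:big} then does the rest. Where you genuinely diverge is in how that maximal $S$ is produced. The paper, for even $q$, uses the same pairing $i\leftrightarrow i^{-1}$ you do; for odd $q$ it passes to a primitive root, rewrites the two conditions as $k\pm m\ne\frac{q-1}{2}$ in $\Z_{q-1}$, and writes down an explicit symmetric interval $\tilde S$ of the right size. You instead treat the conditions as the edge set of a graph on $\F^\times$ that is invariant under the Klein four-group generated by $i\mapsto -i$ and $i\mapsto -i^{-1}$, note that every neighbour of $i$ lies in its orbit (so the graph splits as a disjoint union over orbits --- worth stating explicitly, though it is immediate), check that each size-$4$ orbit induces a $4$-cycle with independence number $2$, and sum over the orbit types, with the cases $q$ even, $q\equiv 1$, and $q\equiv 3\pmod 4$ distinguished by which degenerate orbits occur. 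Your route avoids the primitive root entirely and, as a bonus, shows that $\lfloor q/2\rfloor$ is not merely achievable but is the \emph{maximum} size of an admissible $S$, so the bound $s\le\frac{q+4}{2}$ is the best this particular family of flags can deliver; the paper's route is shorter because it only needs to verify one explicit set. The orbit arithmetic you flag as the remaining work does check out in all three cases ($1+(q-2)/2$, $2+2\cdot\frac{q-5}{4}$, and $1+2\cdot\frac{q-3}{4}$ all equal $\lfloor q/2\rfloor$), so there is no gap.
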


\begin{proof}
A strongly orthogonal collection of order-$q^2$ sudoku solutions of size $|S|$ constructed via Theorem \ref{t:big} gives rise via the identification of Theorem \ref{t:equivtosudoku} to
an $\OOA (4,|S|+2,2,q)$. Therefore it suffices to show that the set $S$ defined in Theorem \ref{t:big} can be chosen so that $|S|=\frac{q}{2}$.

If $q$ is even, then the conditions for membership in $S$ are equivalent to requiring that if $i,j\in S$ are distinct then $ij\ne 1$. Let $1\in S$, and then for each of the remaining $q-2$ members $i$ of $\F ^\times $, select either $i$ or $i^{-1}$ for inclusion in $S$, but not both. Then $|S|=1+(q-2)/2=q/2$, as desired.

If $q$ is odd, let $\alpha\in \F^\times$ be a multiplicative generator for $\F ^\times$ and observe that $\Z _{q-1}\simeq \F^\times$ via $k\mapsto \alpha ^k$. It then suffices to produce $\tilde S\subset \Z_{q-1}$, with
$|\tilde S|=q/2$, where
$k\in \tilde S$ means that $k+\frac{q-1}{2}$ is excluded from $\tilde S$ when $k+\frac{q-1}{2}$ is distinct from $k$, and that $-(k+\frac{q-1}{2})$ is excluded from $\tilde S$ when $-(k+\frac{q-1}{2})$ is distinct from $k$.
In the case that $q-1=4n+2$, we may choose $\tilde S=\{0,\pm 1,\dots ,\pm n\}$, while in the case $q-1=4n$ we may choose $\tilde S=\{0,\pm 1, \dots ,\pm (n-1),n\}$. In both cases, $|\tilde S|=q/2$, as desired.
\end{proof}

Observe that the proof of Theorem \ref{t:big} assumes $|S|\geq 4$, and so is best suited for producing ordered orthogonal arrays of type $\OOA (4,s,2,q)$ where $s\geq 6$. For this reason, Corollary \ref{c:big} does not give best results when $q$ is small (e.g., $q=3,4,5$).

\section{Concluding remarks}

We present two concluding remarks suggesting further study:
First, Corollaries \ref{c:big} and \ref{c:crude} show that when $q$ is a prime power, the maximum value of $s$ for which $\OOA (4,s,2,q)$ exists lies somewhere between $ \frac{q+4}{2}$ and $q+1$. It is possible that the ideas in this paper can be used to further narrow this range of maximum values for $s$.
Second, our results rely heavily on the notion of sudoku flags, which suggests that, generally speaking, flag geometry may play a role in constructing ordered orthogonal arrays that is analagous to the role projective geometry (or, more properly, the geometry of Bruck nets) plays in constructing ordinary orthogonal arrays.

\end{document}